\newtheorem{thm}{Theorem}[section]
\newtheorem{theorem}[thm]{Theorem}
\newtheorem{lemma}[thm]{Lemma}
\theoremstyle{definition}
\newtheorem{remark}[thm]{Remark}
\numberwithin{equation}{section}
\newcommand{\eps}{\varepsilon}
\newcommand{\dx}{\mathrm{d}} 
\newcommand{\E}{\mathcal{E}} 
\newcommand{\A}{\mathcal{A}} 
\newcommand{\B}{\mathcal{B}} 
\newcommand{\I}{\mathcal{I}} 
\newcommand{\J}{\mathcal{J}} 
\newcommand{\N}{\mathbb{N}}
\newcommand{\R}{\mathbb{R}} 
\newcommand{\second}{{\prime\prime}}
\newcommand{\Etilde}{\widetilde{E}}
\newcommand{\Stilde}{\widetilde{S}}
\newcommand{\Vtilde}{\widetilde{V}}
\newcommand{\Odi}[1]{\Odip{}{#1}}
\newcommand{\Odig}[1]{\mathcal{O}\Bigl(#1\Bigr)}
\newcommand{\Odim}[1]{\mathcal{O}\bigl(#1\bigr)}   
\newcommand{\Odip}[2]{\mathcal{O}_{#1}\left(#2\right)}
\newcommand{\Odipg}[2]{\mathcal{O}_{#1}\Bigl(#2\Bigr)}  
\newcommand{\Odipm}[2]{\mathcal{O}_{#1} (#2)}  
\newcommand{\odip}[2]{{o}_{#1}\left(#2\right)}
\newcommand{\odi}[1]{\odip{}{#1}}
\newcommand{\odim}[1]{o\bigl(#1\bigr)}
\newenvironment{Biggcases}{%
  \matrix@check\Biggcases\env@Biggcases
}{%
  \endarray %
}
\def\env@Biggcases{%
  \let\@ifnextchar\new@ifnextchar
  \Biggl\lbrace
  \def\arraystretch{1.2}%
  \array{@{}l@{\quad}l@{}}%
}
\begin{document}

\title[Asymptotic formulae for binary problems]{Short intervals asymptotic formulae for binary problems with prime   powers} 


\author{Alessandro Languasco}
\address{Alessandro Languasco\\
Universit\`a di Padova,  Dipartimento di Matematica  ``Tullio Levi-Civita'', Via Trieste 63, 35121 Padova \\
Italy}
\email{alessandro.languasco@unipd.it}

\author{Alessandro Zaccagnini}
\address{Alessandro Zaccagnini\\
Universit\`a di Parma, Dipartimento di Scienze Matematiche,  Fisiche e Informatiche, Parco Area delle Scienze, 53/a, 43124 Parma \\
Italy}
\email{alessandro.zaccagnini@unipr.it  }

\subjclass[2010]{Primary 11P32; Secondary 11P55, 11P05}

\keywords{Waring-Goldbach problem, Hardy-Littlewood method} 


\maketitle
%
%

\begin{abstract}
We prove  results about the
asymptotic formulae in short intervals 
for the average number  of representations
of  integers of the forms $n=p_{1}^{\ell_1}+p_{2}^{\ell_2}$,
with $\ell_1, \ell_2\in\{2,3\}$, $\ell_1+\ell_2\le 5$ are fixed integers, 
and  $n=p^{\ell_1} + m^{\ell_2}$, with $\ell_1=2$ and $2\le \ell_2\le 11$ 
or $\ell_1=3$ and $ \ell_2=2$  are fixed integers,
 $p,p_1,p_2$ are prime numbers and $m$ is an integer.
\end{abstract}

\bigskip
\section{Introduction}
Let $N$ be a sufficiently large integer and $1\le H \le N$.
In our recent papers   \cite{LanguascoZ2016b} and \cite{LanguascoZ2017a}
we provided   suitable asymptotic formulae in short intervals for the  
number of representation of an integer $n$ as a sum of a prime and 
a prime square, as a sum of a prime and a square, as the sum of two prime squares
or as a sum of a prime square and a square. 

In this paper we generalise the approach already used there to look
for the asymptotic formulae for more difficult binary problems. 
To be able to formulate or statements in a precise way we need more
definitions. 
Let $\ell_1, \ell_2\ge 1$ be integers, 
\begin{equation}
\label{density-def-c-def}
\lambda: =\frac{1}{\ell_1}+\frac{1}{\ell_2}\le 1 \quad \textrm{and}\quad
c(\ell_1,\ell_2):=
\frac{\Gamma(1/\ell_1)\Gamma(1/\ell_2)}{\ell_1\ell_2\Gamma(\lambda)}
=
c(\ell_2,\ell_1).
\end{equation}  
Using these notations we can say that our results in \cite{LanguascoZ2016b} and \cite{LanguascoZ2017a}
are about $\lambda=3/2$ and $\lambda=1$ while here we are interested in the case $\lambda\le 1$.
We also recall that   Suzuki \cite{Suzuki2017b,Suzuki2017} has recently sharpened
our results in   \cite{LanguascoZ2017a} for the case $\lambda=3/2$.

Finally  let
\begin{equation}
\label{A-def} 
A=A(N,d) :=
 \exp \Bigl( d   \Bigl( \frac{\log N}{\log \log N} \Bigr)^{\frac{1}{3}} \Bigr),
\end{equation} 
where $d$ is a real parameter (positive or negative) chosen according
to need, and
\[
\sum_{n=N+1}^{N+H} 
r^{\second}_{\ell_1,\ell_2}(n),
\quad
\textrm{where}
\quad 
r^{\second}_{\ell_1,\ell_2}(n) = \!\!\!\!\!\!\!\!
\sum_{\substack {p_{1}^{\ell_1}+p_{2}^{\ell_2}=n\\N/A \le  p_{1}^{\ell_1} , \, p_{2}^{\ell_2} \le  N}}
\!\!\!\!\!\!\!\!
\log p_{1} \log p_{2}.
\]
 
Due to the available estimates on primes in almost all short intervals and due
to  $\lambda\le 1$, we are unconditionally able
to get a non-trivial result only for  
$\ell_1, \ell_2\in\{2,3\}$, $\ell_1+\ell_2\le 5$;
in fact, since for this additive problem
we can interchange the role of the prime powers involved,
such a  condition is equivalent to   $\ell_1=2, \ell_2\in\{2,3\}$.
\begin{theorem}
\label{thm-uncond}
 Let $N\ge 2$, $1\le H \le N$ be integers.
Moreover let  $\ell_1=2, \ell_2\in\{2,3\}$. 
Then, for every $\eps>0$, there exists $C=C(\eps)>0$ such that
\begin{align*} 
\sum_{n=N+1}^{N+H} &
r^{\second}_{2,\ell_2}(n)
=
c(2,\ell_2)
H  N^{\lambda-1} 
+
\Odipg{\ell_2}{ H N^{\lambda-1} A(N, -C(\eps)) },
\end{align*}
uniformly for    $N^{\frac32-\frac{11}{6\ell_2} +\eps}\le H \le N^{1-\eps}$,
where $\lambda$ and $c(2,\ell_2)$ are defined in \eqref{density-def-c-def}. 
\end{theorem}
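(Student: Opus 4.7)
The plan is to apply the Hardy--Littlewood circle method through a Fourier-integral representation, extending the template the authors developed in \cite{LanguascoZ2016b,LanguascoZ2017a} for $\lambda=3/2$ and $\lambda=1$. Introduce
\[
\Stilde_\ell(\alpha) := \sum_{N/A \le p^\ell \le N} (\log p)\, e(\alpha p^\ell),
\qquad
\Vtilde_\ell(\alpha) := \sum_{N/A \le m^\ell \le N} e(\alpha m^\ell),
\qquad
U(\alpha,H) := \sum_{m=1}^{H} e(\alpha m).
\]
By orthogonality
\[
\sum_{n=N+1}^{N+H} r^{\second}_{2,\ell_2}(n)
= \int_{-1/2}^{1/2} \Stilde_2(\alpha)\, \Stilde_{\ell_2}(\alpha)\, \overline{U(\alpha,H)}\, e(-N\alpha)\, \dx \alpha,
\]
and I would dissect this integral into major arcs $\mathfrak{M}=\{|\alpha|\le \xi\}$ around the origin together with complementary minor arcs $\mathfrak{m}$, for a width $\xi$ to be calibrated as a function of $H$ and $N$.

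On the major arcs, I would approximate each $\Stilde_\ell(\alpha)$ by $(1/\ell)\Vtilde_\ell(\alpha)$ via the PNT in short intervals, losing per factor a classical Vinogradov--Korobov-type error of size $A(N,-C)$. The contribution of this replacement error to the integral is absorbed by Parseval on $\Vtilde$ together with the trivial bound on $U$, and produces precisely the stated error $\Odim{HN^{\lambda-1} A(N,-C(\eps))}$. The main term then comes from extending the resulting $\Vtilde_2\,\Vtilde_{\ell_2}\,\overline{U}\,e(-N\cdot)$ integral to all of $\R$: the substitutions $t_i = y_i^{1/\ell_i}$ and the Gamma-function identity encoded in \eqref{density-def-c-def} yield exactly $c(2,\ell_2)\,H N^{\lambda-1}$, while the tail $|\alpha|>\xi$ is controlled by standard stationary-phase / van der Corput bounds on $\Vtilde_\ell$.

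On the minor arcs I would apply Cauchy--Schwarz,
\[
\int_\mathfrak{m} |\Stilde_2\, \Stilde_{\ell_2}\, U|\, \dx\alpha
\le \Bigl(\sup_{\alpha\in\mathfrak{m}} |\Stilde_{\ell_2}(\alpha)|\Bigr)
\Bigl(\int_0^1 |\Stilde_2(\alpha)|^2\, \dx\alpha\Bigr)^{1/2}
\Bigl(\int_0^1 |U(\alpha,H)|^2\, \dx\alpha\Bigr)^{1/2},
\]
using $\int_0^1 |U(\cdot,H)|^2\, \dx\alpha = H$ and $\int_0^1 |\Stilde_2|^2 \ll N^{1/2}\log N$ from Parseval and the short-interval PNT, coupled with a Weyl-plus-PNT bound for $\sup_\mathfrak{m} |\Stilde_{\ell_2}|$.

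The main obstacle is calibrating $\xi$ so that the major-arc approximation error and the minor-arc estimate both stay below $HN^{\lambda-1}A(N,-C(\eps))$. Balancing the three factors in the Cauchy--Schwarz bound is what forces the stated lower threshold $H\ge N^{3/2 - 11/(6\ell_2)+\eps}$: the exponent $11/(6\ell_2)$ arises as the joint contribution of the $L^2$ mean value of $\Stilde_2$, the pointwise minor-arc bound for $\Stilde_{\ell_2}$ (which is the genuinely weakest link and is qualitatively different in the two cases $\ell_2=2$ and $\ell_2=3$), and the trivial $L^2$-size of the kernel $U(\alpha,H)$; the upper bound $H\le N^{1-\eps}$ is needed only to keep the error smaller than the main term.
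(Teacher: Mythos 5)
There is a genuine gap in the minor-arc step. Your Cauchy--Schwarz factorisation
\[
\int_{\mathfrak{m}} |\Stilde_2\,\Stilde_{\ell_2}\, U|\,\dx\alpha
\le
\Bigl(\sup_{\alpha\in\mathfrak{m}}|\Stilde_{\ell_2}(\alpha)|\Bigr)
\Bigl(\int_0^1|\Stilde_2|^2\Bigr)^{1/2}
\Bigl(\int_0^1|U|^2\Bigr)^{1/2}
\]
requires a nontrivial pointwise bound for the prime exponential sum over $\mathfrak{m}$. But here $\mathfrak{m}=[-1/2,1/2]\setminus[-\xi,\xi]$ is the complement of a \emph{single} neighbourhood of the origin; it is not a Hardy--Littlewood minor-arc set. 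It therefore contains the peaks of $V_{\ell_2}$ at every rational $a/q$ with $q\ge 2$ (for instance $|V_2(1/2)|\asymp N^{1/2}$), so $\sup_{\mathfrak{m}}|V_{\ell_2}|\asymp N^{1/\ell_2}$ is essentially the trivial bound. Plugging that in together with $\int_0^1|U|^2=H$ and $\int_0^1|V_2|^2\ll N^{1/2}L$ gives a minor-arc contribution $\gg H^{1/2}N^{1/4+1/\ell_2}(\log N)^{1/2}$, which is only $\ll HN^{\lambda-1}$ when $H\gg N^{3/2}$, i.e.\ never in the admissible range. Your claim that the exponent $11/(6\ell_2)$ ``arises as the joint contribution of \ldots the pointwise minor-arc bound for $\Stilde_{\ell_2}$'' is not right: no such Weyl-type bound is used or available on $\mathfrak{m}$.

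The paper avoids this by using the \emph{symmetric} Cauchy--Schwarz
\[
\int_{I(B,H)} V_{\ell_1}V_{\ell_2}U
\ll
\Bigl(\int_{I(B,H)}|V_{\ell_1}|^2|U|\Bigr)^{1/2}
\Bigl(\int_{I(B,H)}|V_{\ell_2}|^2|U|\Bigr)^{1/2},
\]
keeping a copy of $|U|^{1/2}$ in each factor. Since $|U(\alpha,H)|\le |\alpha|^{-1}$ on $I(B,H)$, a partial integration against the $L^2$ mean-value estimate for $V_\ell$ over $[-\xi,\xi]$ (Lemma~\ref{zac-lemma}, a Montgomery--Vaughan large-sieve/Hilbert-type bound) gives each factor $\ll N^{1/\ell}L^2+HL^2/B$, hence the whole contribution $\ll HL^2/B$. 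This exploits the \emph{decay} of $U$ rather than a sup bound on $V_{\ell_2}$. The exponent $11/(6\ell_2)$ actually originates on the major arcs, from the constraint $K\ge N^{1-5/(6\ell)+\eps}$ in the unconditional $L^2$ estimate for $V_\ell-T_\ell$ (Lemma~\ref{App-BCP-Gallagher-2}), which rests on Huxley-type zero-density bounds, not on any minor-arc Weyl estimate. Your major-arc sketch (replace $V_\ell$ by its smooth approximant and compute the Gamma-factor singular integral) is broadly aligned with what the paper does, but it is precisely these $L^2$ mean-value estimates, rather than a per-factor pointwise ``Vinogradov--Korobov'' error, that drive the argument and determine the $H$-range. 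As written, the proof does not close.
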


Clearly for $\ell_2=2$ Theorem \ref{thm-uncond} coincides with the result proved in \cite{LanguascoZ2016b},
but for $\ell_2=3$ it is new.

Assuming the Riemann Hypothesis (RH)  holds and taking
\begin{equation}
\label{r-def}
R^{\second}_{\ell_1,\ell_2}(n)= 
\sum_ {p_{1}^{\ell_1}+p_{2}^{\ell_2}=n }
\log p_{1} \log p_{2},
\end{equation}
we get a non-trivial result for    $\sum_{n = N+1}^{N + H} 
R^{\second}_{\ell_1,\ell_2}(n)$ uniformly for every $2\le \ell_1 \le \ell_2$
and $H$ in some range. Let further
\begin{equation}
\label{aB-def}
a(\ell_1,\ell_2):=\frac{\ell_1}{2(\ell_1-1)\ell_2}\in \Bigl(0, \frac12\Bigr]
\quad
\text{and}
\quad
b(\ell_1):=\frac{3\ell_1}{2(\ell_1-1)}\in \Bigl(\frac32,3\Bigr].
\end{equation}
We use throughout the paper the convenient notation $f=\infty(g)$
for $g=\odi{f}$.

\begin{theorem}
\label{thm-RH}
 Let $N\ge 2$, $1\le H \le N$,  $2\le \ell_1\le \ell_2$ be integers
 and assume the Riemann Hypothesis holds.
Then
\begin{equation*} 
   \sum_{n = N+1}^{N + H} 
R^{\second}_{\ell_1,\ell_2}(n)
=
c(\ell_1,\ell_2)HN^{\lambda-1}
+\Odipg{\ell_1,\ell_2}{
H^2N^{\lambda-2}
+
H^{\frac{1}{\ell_1}} N^{\frac{1}{2\ell_2}}( \log N)^{\frac{3}{2}}
}
\end{equation*}
uniformly for  
 $\infty(N^{1-a(\ell_1,\ell_2)}( \log N)^{b(\ell_1)})\le H \le\odi{N}$,
 where $\lambda$ and $c(\ell_1,\ell_2)$ are defined in 
 \eqref{density-def-c-def}, $a(\ell_1,\ell_2), b(\ell_1)$ are defined in \eqref{aB-def}.
\end{theorem}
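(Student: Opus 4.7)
The plan is to apply the Hardy-Littlewood circle method in the short-interval variant developed in \cite{LanguascoZ2016b, LanguascoZ2017a}. I introduce the exponential sums
\[
\Stilde_\ell(\alpha) := \sum_{p^\ell \le 2N} \log p \cdot e(p^\ell \alpha), \qquad
T_\ell(\alpha) := \sum_{1 \le m \le 2N} \ell^{-1} m^{1/\ell-1} e(m\alpha),
\]
together with the kernel $U(\alpha, H) := \sum_{n=1}^{H} e(n\alpha)$, which satisfies $|U(\alpha,H)| \ll \min(H, \|\alpha\|^{-1})$. Orthogonality gives
\[
\sum_{n=N+1}^{N+H} R^{\second}_{\ell_1,\ell_2}(n) = \int_{-1/2}^{1/2} \Stilde_{\ell_1}(\alpha)\Stilde_{\ell_2}(\alpha) U(-\alpha,H) e(-N\alpha)\, \dx\alpha.
\]
I decompose $\Stilde_{\ell_i} = T_{\ell_i} + (\Stilde_{\ell_i} - T_{\ell_i})$ and expand the product, obtaining one principal integral and three error integrals.

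The principal integral equals $\sum_{n=N+1}^{N+H}(\ell_1\ell_2)^{-1}\sum_{m_1+m_2=n}m_1^{1/\ell_1-1}m_2^{1/\ell_2-1}$. By the Beta-function asymptotic
$\sum_{m_1+m_2=n} m_1^{1/\ell_1-1}m_2^{1/\ell_2-1} = \Gamma(1/\ell_1)\Gamma(1/\ell_2)\Gamma(\lambda)^{-1}n^{\lambda-1} + \Odim{n^{\lambda-2}}$, and expanding $n^{\lambda-1}$ around $N$, this produces the main term $c(\ell_1,\ell_2)HN^{\lambda-1}$ together with an error $\Odim{H^2 N^{\lambda-2}}$, matching the first error term in the theorem.

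For the three remaining integrals I apply Cauchy-Schwarz, relying on two ingredients. The first is the RH-conditional short-arc $L^2$ estimate of the shape
\[
\int_{-\xi}^{\xi}|\Stilde_\ell(\alpha)-T_\ell(\alpha)|^2 \dx\alpha \ll_\ell N^{1/\ell}\xi\log^2 N + N^{2/\ell-1}\log^2 N \qquad (0<\xi<1/2),
\]
which is the Parseval transcription of Selberg's RH bound $\int_0^X (\psi(x+h)-\psi(x)-h)^2 \dx x \ll Xh\log^2 X$, transferred to $\ell$-th powers by partial summation. The second is the van der Corput / Weyl-type pointwise bound $|T_\ell(\alpha)| \ll_\ell \|\alpha\|^{-1/\ell}$ on the non-zero arc. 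Splitting $[-1/2,1/2]$ into the central arc $|\alpha|\le 1/H$ (where $|U(\alpha,H)|\le H$) and its complement (where $|U(\alpha,H)|\ll \|\alpha\|^{-1}$), applying Cauchy-Schwarz on each piece and exploiting the pointwise decay of $T_\ell$ in place of its trivial $L^\infty$ bound, the three error integrals combine to $\Odim{H^{1/\ell_1}N^{1/(2\ell_2)}(\log N)^{3/2}}$.

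The main obstacle is arranging the Cauchy-Schwarz steps on the complementary arc so that the final bound carries $H^{1/\ell_1}$ rather than the naive $N^{1/\ell_1}$, since it is precisely this factor that determines the admissible range of $H$. Once that is done, balancing the errors against the main term is elementary: $H^2N^{\lambda-2} = \odim{HN^{\lambda-1}}$ gives $H=\odi{N}$, and $H^{1/\ell_1}N^{1/(2\ell_2)}(\log N)^{3/2} = \odim{HN^{\lambda-1}}$ gives, after a short algebraic manipulation of the exponents, $H = \infty(N^{1-a(\ell_1,\ell_2)}(\log N)^{b(\ell_1)})$ with $a,b$ as in \eqref{aB-def}, recovering precisely the range stated in the theorem.
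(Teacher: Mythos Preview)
Your overall architecture --- circle method, replace each generating function by a smooth approximation, extract the main term, and bound the three cross/error integrals by Cauchy--Schwarz together with an RH mean-square bound --- is exactly the shape of the paper's argument, and your final balancing of exponents to recover $a(\ell_1,\ell_2)$ and $b(\ell_1)$ is correct. The gap is at the single most important technical input.

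You work with \emph{unweighted} finite sums $\Stilde_\ell(\alpha)=\sum_{p^\ell\le 2N}(\log p)\,e(p^\ell\alpha)$ and claim
\[
\int_{-\xi}^{\xi}\bigl|\Stilde_\ell(\alpha)-T_\ell(\alpha)\bigr|^2\,\dx\alpha
\ \ll_\ell\ N^{1/\ell}\xi\,L^2 + N^{2/\ell-1}L^2
\qquad(0<\xi<\tfrac12).
\]
The paper's analogue for finite sums (Lemma~\ref{App-BCP-Gallagher-2}) is only proved for $\xi\le N^{1/\ell-1}$, and the Remark following Lemma~\ref{LP-Lemma-gen} explains that this restriction is precisely why the conditional proof abandons finite sums in favour of the exponentially-weighted Hardy--Littlewood functions $\sum_{n\ge1}\Lambda(n)e^{-n^\ell/N}e(n^\ell\alpha)$, approximated by $\Gamma(1/\ell)/(\ell z^{1/\ell})$: for those, Lemma~\ref{LP-Lemma-gen} gives the bound $\ll_\ell N^{1/\ell}\xi L^2$ uniformly for all $0\le\xi\le\frac12$. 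If one insists on finite sums with the restricted range, one is forced to split the unit interval at some $B/H$ (as in the unconditional proof) and control the tail by Lemma~\ref{zac-lemma}; the combined constraints $B/H\le N^{1/\ell_2-1}$ and $B\gg N^{1-\lambda}L^2$ then yield roughly $H\gg N^{2-1/\ell_1-2/\ell_2}$, which for $\ell_1=\ell_2\ge 3$ already gives an empty range, and in general is worse than the theorem's $N^{1-a(\ell_1,\ell_2)}$. Your one-line justification (``Parseval transcription of Selberg's bound, transferred by partial summation'') does not address the regime $\xi>N^{1/\ell-1}$, where $1/\xi$ is shorter than the gap between consecutive $\ell$-th powers and the Gallagher--Selberg mechanism no longer applies directly.

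The paper therefore computes the weighted average $\sum_{n}e^{-n/N}R^{\second}_{\ell_1,\ell_2}(n)$, obtains the main term from the Laplace-type Lemma~\ref{Laplace-formula} (rather than a Beta sum), bounds the error integrals over the whole of $[-\tfrac12,\tfrac12]$ using Lemmas~\ref{LP-Lemma-gen} and~\ref{partial-int-average}, and removes the weight $e^{-n/N}=e^{-1}+\Odi{H/N}$ at the end. Your sketch becomes a valid proof if you either switch to this weighted set-up, or supply an independent proof of your full-range $L^2$ estimate for the finite sums; as written, that estimate is asserted but not available.
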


Clearly for $\ell_1=\ell_2=2$, Theorem \ref{thm-RH} coincides with the result proved in \cite{LanguascoZ2016b}
but in all the other cases it is new.
To prove Theorem \ref{thm-RH} we will have to use the original
Hardy-Littlewood generating functions to exploit the wider 
uniformity over $H$ they allow;
see the remark after Lemma \ref{LP-Lemma-gen}.

A slightly different problem is the one in which we replace
a prime power with a power. Letting
 \[
 r^{\prime}_{\ell_1,\ell_2}(n) = \!\!\!\!\!\!\!\!
\sum_{\substack {p^{\ell_1}+m^{\ell_2}=n\\N/A \le  p^{\ell_1} , \, m^{\ell_2} \le  N}}
\!\!\!\!\!\!\!\!
\log p, 
\]
we have the following
\begin{theorem}
\label{thm-uncond-HL}
 Let $N\ge 2$, $1\le H \le N$. Moreover let $\ell_1, \ell_2\ge2$.
Then, for every $\eps>0$, there exists $C=C(\eps)>0$ such that
\begin{align*} 
\sum_{n=N+1}^{N+H} &
r^{\prime}_{\ell_1,\ell_2}(n)
=
c(\ell_1,\ell_2)H  N^{\lambda-1} 
+
\Odipg{\ell_1,\ell_2}{H N^{\lambda-1} A(N, -C(\eps)) },
\end{align*}
uniformly for  $N^{2-\frac{11}{6\ell_1} -\frac{1}{\ell_2}+\eps}\le H \le N^{1-\eps}$
with $\ell_1=2$ and $2\le \ell_2\le 11$, or $\ell_1=3$ and $ \ell_2=2$,
where $\lambda$ and $c(\ell_1,\ell_2)$ are defined in \eqref{density-def-c-def}. 
\end{theorem}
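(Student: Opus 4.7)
The plan is to extend the short-interval circle-method approach of the authors' earlier papers \cite{LanguascoZ2016b,LanguascoZ2017a} to the present mixed prime-power/power binary problem. Define the generating functions
\[
\Stilde_{\ell_1}(\alpha) := \sum_{N/A\le p^{\ell_1}\le N} \log p \cdot e(p^{\ell_1}\alpha),
\qquad
\Vtilde_{\ell_2}(\alpha) := \sum_{N/A \le m^{\ell_2}\le N} e(m^{\ell_2}\alpha),
\]
and the short-interval window $U(\alpha,H):=\sum_{n=N+1}^{N+H} e(-n\alpha)$. Orthogonality then gives
\[
\sum_{n=N+1}^{N+H} r^{\prime}_{\ell_1,\ell_2}(n)
= \int_{-1/2}^{1/2} \Stilde_{\ell_1}(\alpha)\Vtilde_{\ell_2}(\alpha) U(\alpha,H)\dx\alpha.
\]
The natural substitute for the prime-power generating function is the Ces\`aro-type arithmetic approximation
\[
\Jtilde_{\ell_1}(\alpha) := \sum_{N/A\le n\le N} \frac{e(n\alpha)}{\ell_1\, n^{1-1/\ell_1}},
\]
which reproduces, via the prime number theorem with the Vinogradov--Korobov error term, the expected density of primes $p$ with $p^{\ell_1}=n$. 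Writing $\Stilde_{\ell_1}=\Jtilde_{\ell_1}+\Etilde_{\ell_1}$ splits the integral into a principal part $\int\Jtilde_{\ell_1}\Vtilde_{\ell_2} U\dx\alpha$ and an error part $\int\Etilde_{\ell_1}\Vtilde_{\ell_2} U\dx\alpha$.

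For the principal part, direct orthogonality converts the integral into a double sum over $n\in[N+1,N+H]$ and $m^{\ell_2}\in[N/A,N]$ with $n-m^{\ell_2}\in[N/A,N]$; partial summation then evaluates it as
\[
c(\ell_1,\ell_2)HN^{\lambda-1}\bigl(1+\Odim{(\log N)^{-1}}\bigr),
\]
the constant in \eqref{density-def-c-def} emerging through the Beta integral $B(1/\ell_1,1/\ell_2)$. The error integral is attacked by first isolating the narrow arc $|\alpha|\le 1/H$, on which the explicit formula for $\psi$ combined with the Vinogradov--Korobov zero-free region yields a pointwise bound $|\Etilde_{\ell_1}(\alpha)|\ll N^{1/\ell_1}A(N,-c)$ of the required saving type; this piece is absorbed into the error $HN^{\lambda-1}A(N,-C(\eps))$ using trivial bounds on $\Vtilde_{\ell_2}$ and $U$.

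Outside the narrow arc I would apply Cauchy--Schwarz and Parseval to reduce the contribution to the short-interval variance of $\psi_{\ell_1}(y):=\sum_{p^{\ell_1}\le y}\log p$ on intervals of length $H$ inside $[N/A,N]$. The main obstacle, and the source of the exponent $11/(6\ell_1)$ in the admissible range of $H$, is the control of this variance: after the change of variable $y=x^{1/\ell_1}$ it becomes the classical mean-square of $\psi(y+h)-\psi(y)-h$ for $y\sameorder N^{1/\ell_1}$ and $h\sameorder HN^{1/\ell_1-1}$, where the strongest available unconditional input is the Saffari--Vaughan/Heath-Brown mean-square estimate sharpened by the Vinogradov--Korobov zero-free region. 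Tracking the dependence on $H$ and $\ell_2$ through the Cauchy--Schwarz configuration, weighted against the $N^{1/\ell_2}$ scale of $\Vtilde_{\ell_2}$ and the target magnitude $HN^{\lambda-1}$, produces the stated lower bound $H\ge N^{2-11/(6\ell_1)-1/\ell_2+\eps}$. The case restrictions $\ell_1=2$ with $2\le\ell_2\le 11$ and $\ell_1=3$ with $\ell_2=2$ are precisely those for which this threshold lies below $N^{1-\eps}$ and thus leaves a non-trivial admissible window for $H$.
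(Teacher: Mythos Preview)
Your overall strategy---circle method, approximate the prime-power sum by the density sum $\Jtilde_{\ell_1}$, and identify the short-interval prime variance as the decisive input---matches the paper's, and you correctly trace the exponent $11/(6\ell_1)$ to the Huxley-type mean-square bound. But the arc decomposition you propose has a structural gap.

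The narrow-arc argument does not work as stated. A pointwise bound $|\Stilde_{\ell_1}(\alpha)-\Jtilde_{\ell_1}(\alpha)|\ll N^{1/\ell_1}A(N,-c)$ holds essentially only at $\alpha=0$; partial summation against $e(t^{\ell_1}\alpha)$ costs a factor $1+|\alpha|N$, so on $|\alpha|\le 1/H$ you only get $\ll (N/H)\,N^{1/\ell_1}A(N,-c)$. Combined with the trivial bounds $|\Vtilde_{\ell_2}|\le N^{1/\ell_2}$, $|U|\le H$ and the measure $2/H$, this yields $(N^{1+\lambda}/H)\,A(N,-c)$, too large by a factor $(N/H)^2$. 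The outside piece is equally problematic: on minor arcs the approximation $\Stilde_{\ell_1}\approx\Jtilde_{\ell_1}$ carries no saving, since by Parseval the full-interval $L^2$ norm of $\Stilde_{\ell_1}-\Jtilde_{\ell_1}$ is of order $N^{1/\ell_1}$ up to logarithms---no smaller than that of $\Stilde_{\ell_1}$ itself. So the short-interval variance input you describe cannot be deployed on the region $|\alpha|>1/H$ in the way you suggest.

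The paper's fix is to split at $B/H$ with $B=N^{1-\lambda}A(N,c)$ rather than at $1/H$. On the minor arcs $|\alpha|>B/H$ the approximation is abandoned entirely: one bounds $\int|V_{\ell_1}|^2|U|$ and $\int|T_{\ell_2}|^2|U|$ directly via the Montgomery--Vaughan mean-value theorem (Lemma~\ref{zac-lemma}), and Cauchy--Schwarz gives a contribution $\ll HL^{3/2}/B\ll HN^{\lambda-1}A(N,-c)L^{3/2}$. On the major arc $|\alpha|\le B/H$ one uses the $L^2$ bound $\int_{-B/H}^{B/H}|V_{\ell_1}-T_{\ell_1}|^2\,\dx\alpha\ll N^{2/\ell_1-1}A(N,-c_1)$ of Lemma~\ref{App-BCP-Gallagher-2} (valid for $H/B\ge N^{1-5/(6\ell_1)+\eps}$), which \emph{is} your Saffari--Vaughan/Gallagher variance input, paired via Cauchy--Schwarz with the $L^2$ norm of $f_{\ell_2}$. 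In other words, your key analytic ingredient is correct but belongs on the major arc, not its complement. A minor further point: the main-term error you quote, $O((\log N)^{-1})$, is too weak for the theorem as stated; the truncation at $N/A$ actually gives $O(A^{-1/\ell_2})$ (see \S\ref{comp-main-term}), which is of the required $A(N,-C)$ shape.
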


Clearly for $\ell_1=\ell_2=2$, Theorem \ref{thm-uncond-HL} coincides with the result proved in \cite{LanguascoZ2016b}
but in all the other cases it is new.
In this case we cannot interchange the role of the prime powers as we can do for
the first two theorems we proved; hence the  different  condition on the size of $H$.  
 
In the conditional case, as for the proof of Theorem  \ref{thm-RH},
we need to use the Hardy-Littlewood original functions, but in this case 
we are forced to restrict our analysis to the $p^\ell+m^2$ problem
due to the lack of an analogue of the functional equation
\eqref{omega-approx} in the general case. 
It is well known that this is crucial in these problems.
Letting
 \[
R^{\prime}_{\ell,2}(n)= 
\sum_ {p^{\ell}+m^2=n }
\log p,
\]
we have the following
\begin{theorem}
\label{thm-RH-HL}
 Let $N\ge 2$, $1\le H \le N$,  $\ell\ge 2$ be integers and
assume the Riemann Hypothesis  holds. Then
\begin{align*} 
   \sum_{n = N+1}^{N + H} 
R^{\prime}_{\ell,2}(n)
&=
 c(\ell,2)  HN^{\frac{1}{\ell}-\frac{1}{2}} 
 \\
&\hskip1cm + \Odipg{\ell}{\frac{H^2}{N^{\frac{3}{2}-\frac{1}{\ell}}}+ \frac{H N^{\frac{1}{\ell}-\frac{1}{2}}\log \log N} {(\log N)^{\frac{1}{2}}} 
+
H^{\frac{1}{2}}N^{\frac{1}{2\ell}} \log N}
\end{align*}
uniformly for  
$\infty(N^{1-\frac{1}{\ell}}(\log N)^{2}) \le H \le \odi{N}$,
where $ c(\ell,2) $ is defined in \eqref{density-def-c-def}.
\end{theorem}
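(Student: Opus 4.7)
The plan is to follow the route sketched in the remark after Lemma \ref{LP-Lemma-gen}, working with the (unsmoothed) Hardy-Littlewood generating functions so as to retain the full uniformity $H\le\odi{N}$. I set
\[
\Stilde_\ell(\alpha) := \sum_{p\ge 2} (\log p)\, e^{-p^\ell/N} e(p^\ell\alpha),\qquad
\Vtilde_2(\alpha) := \sum_{m\ge 1} e^{-m^2/N} e(m^2\alpha),
\]
and $U(\alpha,H) := \sum_{n=N+1}^{N+H} e(n\alpha)$; orthogonality then gives
\[
\sum_{n=N+1}^{N+H} R^{\prime}_{\ell,2}(n)\, e^{-n/N} = \int_{-1/2}^{1/2} \Stilde_\ell(\alpha)\Vtilde_2(\alpha) U(-\alpha,H)\,\dx\alpha.
\]
Since $H = \odim{N}$, the weight satisfies $e^{-n/N} = e^{-1}(1+\Odim{H/N})$ on $[N+1,N+H]$, so unweighting at the very end produces only an error absorbed by the announced $H^2 N^{1/\ell - 3/2}$ summand.

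Next I would insert the two classical approximations. With $z := 1/N - 2\pi i\alpha$, the explicit formula under RH (in the shape used in \cite{LanguascoZ2016b}) yields
\[
\Stilde_\ell(\alpha) = \frac{\Gamma(1/\ell)}{\ell}\, z^{-1/\ell} - \frac{1}{\ell}\sum_{\rho} \Gamma(\rho/\ell)\, z^{-\rho/\ell} + \Etilde_\ell(\alpha),
\]
with $\rho = 1/2 + i\gamma$ running over the non-trivial zeros of $\zeta$, while the theta functional equation \eqref{omega-approx} gives
\[
\Vtilde_2(\alpha) = \frac{\Gamma(1/2)}{2}\, z^{-1/2} + \Etilde_V(\alpha).
\]
Expanding the product and integrating against $U(-\alpha,H)$ splits the integral into a main piece plus three cross pieces. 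The main piece is
\[
\frac{\Gamma(1/\ell)\Gamma(1/2)}{2\ell}\int_{-1/2}^{1/2} z^{-\lambda} U(-\alpha,H)\,\dx\alpha,
\]
which by the Mellin-type identity $\int_\R z^{-\lambda} e(-n\alpha)\,\dx\alpha = n^{\lambda-1} e^{-n/N}/\Gamma(\lambda)$, combined with $n^{\lambda-1} = N^{\lambda-1}+\Odim{H N^{\lambda-2}}$ on the relevant range and a trivial estimate for the missing tails, produces $c(\ell,2)\, e^{-1} HN^{\lambda-1} + \Odim{H^2 N^{\lambda-2}}$, matching the first error summand after unweighting.

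The cross integral involving $\Etilde_\ell$ is negligible by the pointwise estimates available under RH. The critical contribution is the one pairing $z^{-1/2}\sum_\rho\Gamma(\rho/\ell)z^{-\rho/\ell}$ with $U(-\alpha,H)$: by Cauchy-Schwarz and Parseval ($\int|U(-\alpha,H)|^2\,\dx\alpha = H$) together with the RH mean-value bound
\[
\int_{-1/2}^{1/2}\Bigl| z^{-1/2}\sum_\rho\Gamma(\rho/\ell)z^{-\rho/\ell}\Bigr|^2 \dx\alpha \ll N^{1/\ell}(\log N)^2,
\]
this yields the $H^{1/2} N^{1/(2\ell)}\log N$ term. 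The last cross integral, combining $\sum_\rho \Gamma(\rho/\ell)z^{-\rho/\ell}$ with $\Etilde_V$, I would estimate by a dyadic splitting of $\alpha$, using $|U(\alpha,H)|\ll\min(H,|\alpha|^{-1})$ together with an $L^1$-type bound on the zero sum under RH; this is what produces the delicate $H N^{1/\ell - 1/2}\log\log N/(\log N)^{1/2}$ remainder. The principal obstacle of the argument is exactly this last balancing: one cannot afford to lose any power of $\log N$ in the zero-sum estimates, which is why one has to retain the unsmoothed kernel $U(\alpha,H)$ rather than convolving it as in our earlier papers, and why the lower threshold $\infty(N^{1-1/\ell}(\log N)^2)$ for $H$ is sharp for this method.
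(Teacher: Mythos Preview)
Your proposal has a genuine structural gap: you attempt to work on the full interval $[-1/2,1/2]$ using only the product decomposition, but the theta approximation $\omega_2(\alpha)\approx \tfrac{\sqrt\pi}{2}z^{-1/2}$ (your $\Vtilde_2=\tfrac{\Gamma(1/2)}{2}z^{-1/2}+\Etilde_V$) is useless once $|\alpha|\gtrsim N^{-1/2}$, since by \eqref{omega-Y-estim} the tail $\sum_j e^{-j^2\pi^2/z}$ is then as large as $\omega_2$ itself. This is precisely why the paper introduces a parameter $D$ and splits at $|\alpha|=D/H$: on $[-D/H,D/H]$ the theta expansion is used and controlled (this is $I_1,I_2,I_3$), while on the complement one estimates $\Stilde_\ell\,\omega_2$ directly via Lemma~\ref{zac-lemma-series} (this is $I_4$). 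Your decomposition also omits the cross term ``main of $\Stilde_\ell$'' $\times\ \Etilde_V$, which on the would-be minor arc is not small.

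Your attribution of the delicate error $HN^{1/\ell-1/2}\log\log N/(\log N)^{1/2}$ to the cross integral ``zero sum $\times\ \Etilde_V$'' via an unspecified $L^1$-bound on $\sum_\rho\Gamma(\rho/\ell)z^{-\rho/\ell}$ is not how this term arises. In the paper that error term comes from the \emph{choice} of $D$: the minor-arc bound \eqref{I4-estim-final-part4} contributes $L^{3/2}H/D$, while the estimate of the theta tail $I_3$ requires $H=\infty(N^{1/2}D)$; balancing these forces $D=N^{1/2-1/\ell}L^{2}/\log L$, and it is the resulting $L^{3/2}H/D$ that produces the $\log\log N/(\log N)^{1/2}$ summand. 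The zero-sum (equivalently $\Etilde_\ell$) is handled throughout only in $L^2$ via Lemma~\ref{LP-Lemma-gen}, and gives the $H^{1/2}N^{1/(2\ell)}\log N$ term through the bound \eqref{I2-estim-final-part4}. Finally, note that your $\Stilde_\ell$ and $\Vtilde_2$ are, in the paper's notation, $\Vtilde_\ell$ and $\omega_2$ respectively; the paper first passes from $\Vtilde_\ell$ to $\Stilde_\ell$ (the $\Lambda$-weighted sum) via Lemma~\ref{tilde-trivial-lemma}, which is the term $I_0$.
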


Clearly for $\ell =2$, Theorem \ref{thm-RH-HL} coincides with the result proved in \cite{LanguascoZ2016b}
but in all the other cases it is new.
The proof of Theorem \ref{thm-RH-HL} needs the use of the functional equation \eqref{omega-approx}
and hence it is different from the one of Theorem \ref{thm-RH}.

We finally remark that we deal with a similar problem with a $k$-th power of a prime and two squares of primes in 
\cite{LZ2018}.

\textbf{Acknowledgement}.
We thank the anonymous referee for his/her precise remarks.

\section{Setting}
Let $\ell \ge 2$, $\ell_1, \ell_2\ge 2$ be  integers,  $e(\alpha) = e^{2\pi i\alpha}$,
$\alpha \in [-1/2,1/2]$,
\begin{align}
\notag
S_{\ell}(\alpha) &= \sum_{N/A \le  m^{\ell} \le  N}\Lambda(m)\ e(m^{\ell} \alpha),   \quad
V_{\ell}(\alpha) = \sum_{N/A \le  p^{\ell} \le  N} \log p\ e(p^{\ell} \alpha), \\
\label{main-defs}
T_{\ell}(\alpha) &= \sum_{N/A \le m^{\ell} \le  N} e(m^{\ell} \alpha ), \ 
f_{\ell}(\alpha)  =\frac{1}{\ell}\sum_{N/A \le  m\le  N} m^{\frac{1}{\ell}-1}\ e(m\alpha), \\ 
\notag
U(\alpha,H) &= \sum_{1\le  m\le  H}e(m\alpha),
\end{align}
where $A$ is defined in \eqref{A-def}.
 We also have the usual numerically explicit inequality
\begin{equation}
\label{UH-estim}
\vert U(\alpha,H) \vert
\le
\min \bigl(H;  \vert \alpha\vert ^{-1}\bigr),
\end{equation}
see, \emph{e.g.}, on page 39 of Montgomery \cite{Montgomery1994},
and, by Lemmas 2.8 and 4.1 of Vaughan \cite{Vaughan1997}, we obtain
\begin{equation}
\label{f-ell-T-ell-estim}
f_{\ell}(\alpha) \ll_\ell 
\min \bigl(N^{\frac{1}{\ell}};  \vert \alpha\vert ^{-\frac{1}{\ell}}\bigr); \quad
\vert T_{\ell}(\alpha) - f_{\ell}(\alpha) \vert \ll (1+\vert \alpha \vert N)^{\frac12}.
\end{equation}
Recalling that $\eps>0$,  we let $L= \log N$  and
\begin{equation}
\label{B-def}
B=B(N,c,\ell_1,\ell_2)=
N ^{1-\lambda} A(N, c),
\end{equation} 
where  $\lambda$ is defined in 
 \eqref{density-def-c-def} and $c=c(\eps)>0$ will be chosen later.

\section{Lemmas}

 \begin{lemma} 
\label{UH-average}
Let $H\ge 2$, $\mu\in \R$, $\mu\ge 1$. Then
\[
\int_{-\frac12}^{\frac12} 
\vert U(\alpha,H)\vert^{\mu} \, \dx \alpha 
\ll
\begin{cases}
\log H& \text{if}\ \mu=1 \\
H^{\mu-1}& \text{if}\ \mu>1.
\end{cases}
\]
\end{lemma}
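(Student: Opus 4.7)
The plan is to split the integral at $|\alpha| = 1/H$ and apply the pointwise bound \eqref{UH-estim}, namely $|U(\alpha,H)| \le \min(H, |\alpha|^{-1})$, on each piece separately.

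First I would use the symmetry $|U(-\alpha,H)| = |U(\alpha,H)|$ to reduce to $\int_0^{1/2}$ up to a factor of $2$. On the "major arc" piece $[0, 1/H]$, bound $|U(\alpha,H)|^\mu \le H^\mu$, giving a contribution $\ll H^\mu \cdot H^{-1} = H^{\mu-1}$; note that for $\mu = 1$ this is $\Odim{1}$, which is absorbed by the $\log H$ term. On the "minor arc" piece $[1/H, 1/2]$, use $|U(\alpha,H)|^\mu \le \alpha^{-\mu}$ and compute
\[
\int_{1/H}^{1/2} \alpha^{-\mu} \, \dx\alpha
=
\begin{cases}
\log(H/2) \ll \log H & \text{if } \mu = 1, \\[2pt]
\dfrac{H^{\mu-1} - 2^{\mu-1}}{\mu - 1} \ll_\mu H^{\mu-1} & \text{if } \mu > 1.
\end{cases}
\]

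Adding the two contributions yields the claimed bounds. There is no essential obstacle here: the only minor point worth noting is that the implicit constant in the case $\mu > 1$ blows up as $\mu \to 1^+$ (because of the $1/(\mu-1)$ factor from the antiderivative), but since the lemma is applied for fixed values of $\mu$ determined by $\ell_1, \ell_2$, this is harmless and absorbed into the $\ll$-constant.
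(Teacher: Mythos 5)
Your proposal is correct and follows the same approach as the paper: split at $|\alpha|=1/H$, bound $|U(\alpha,H)|$ by $H$ on the inner piece and by $|\alpha|^{-1}$ on the outer piece via \eqref{UH-estim}, and integrate. The remark about the implied constant degenerating as $\mu\to 1^+$ is a sensible observation but, as you note, harmless here.
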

\begin{proof}
By \eqref{UH-estim} we can write that
\[
\int_{-\frac12}^{\frac12} 
\vert U(\alpha,H)\vert^{\mu} \, \dx \alpha 
\ll
H^\mu \int_{-\frac1H}^{\frac1H}  \dx \alpha +
\int_{\frac1H}^{\frac12} 
\frac{\dx \alpha}{\alpha^\mu}
\]
and the result follows immediately.
\end{proof}

\begin{lemma}
\label{trivial-lemma}
Let $\ell >0$ be a real number. Then
\(
\vert S_{\ell}(\alpha)- V_{\ell}(\alpha) \vert 
\ll_{\ell}
 N^{\frac{1}{2\ell}}   .
\)
\end{lemma}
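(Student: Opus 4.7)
The content of this lemma is essentially that replacing the von Mangoldt weight $\Lambda(m)$ by the prime-only weight $\log p$ costs only $O_\ell(N^{1/(2\ell)})$, since the discrepancy is supported on proper prime powers, which are sparse. The proof should be entirely trivial: take absolute values and count prime powers with Chebyshev's theorem.

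The plan is as follows. Recalling that $\Lambda(m)$ is supported on prime powers $m = p^k$ with $\Lambda(p^k)=\log p$, I would first split the sum defining $S_{\ell}(\alpha)$ according to $k \ge 1$; the $k=1$ contribution is exactly $V_{\ell}(\alpha)$, so
\[
S_{\ell}(\alpha) - V_{\ell}(\alpha) = \sum_{k\ge 2}\ \sum_{N/A \le p^{k\ell} \le N} (\log p)\, e(p^{k\ell}\alpha).
\]
Applying the triangle inequality kills the exponential factor and yields
\[
\vert S_{\ell}(\alpha) - V_{\ell}(\alpha)\vert \le \sum_{k\ge 2}\ \sum_{p \le N^{1/(k\ell)}} \log p = \sum_{k\ge 2} \theta\bigl(N^{1/(k\ell)}\bigr),
\]
where the range of $k$ is effectively $2 \le k \le (\log N)/\ell$.

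Next I would invoke Chebyshev's bound $\theta(x) \ll x$. The $k=2$ term contributes $\ll N^{1/(2\ell)}$, while for $k\ge 3$ each term is $\ll N^{1/(3\ell)}$ and there are $O(\log N)$ such values of $k$. Since $N^{1/(2\ell) - 1/(3\ell)} = N^{1/(6\ell)}$ dominates any power of $\log N$ for $N$ large, we obtain
\[
\sum_{k \ge 2} \theta\bigl(N^{1/(k\ell)}\bigr) \ll_\ell N^{1/(2\ell)} + N^{1/(3\ell)}\log N \ll_\ell N^{1/(2\ell)},
\]
which gives the required bound. There is no real obstacle here — the estimate is uniform in $\alpha$ because the oscillation $e(p^{k\ell}\alpha)$ is discarded at the first step, and the implicit constant depends only on $\ell$ through Chebyshev's constant and the length of the $k$-range.
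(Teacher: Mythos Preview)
Your proof is correct and follows essentially the same approach as the paper: write the difference as a double sum over $k\ge 2$ and primes $p\le N^{1/(k\ell)}$, apply the triangle inequality, and use the Chebyshev bound $\theta(x)\ll x$. The only cosmetic difference is that the paper bounds the resulting sum over $k$ by the integral $\int_{2}^{O(L)} N^{1/(t\ell)}\,\mathrm{d}t\ll_\ell N^{1/(2\ell)}$, whereas you split off the $k=2$ term and bound the tail $k\ge 3$ by $N^{1/(3\ell)}\log N$; both arguments are equivalent and equally elementary.
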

\begin{proof}
Clearly we have
\begin{align*}
 \vert S_{\ell}(\alpha)- V_{\ell}(\alpha) \vert 
\le 
\sum_{k= 2}^{\Odi{L}}\sum_{p^{k\ell}\le  N}   \log p\ \ll_\ell
\int_{2}^{\Odi{L}}    N^{1/(t\ell)}\  \dx t
\ll_\ell N^{\frac{1}{2\ell}},
 \end{align*}
where in the last but one inequality we used  a weak form of the Prime Number Theorem.
\end{proof}

We need the following lemma which collects the results 
of Theorems 3.1-3.2 of \cite{LanguascoZ2013b}; see also Lemma 1 of \cite{LanguascoZ2016a}.
\begin{lemma}  
\label{App-BCP-Gallagher}
Let $\ell  > 0$ be a real number and $\eps$ be an arbitrarily small
positive constant. Then there exists a positive constant 
$c_1 = c_{1}(\eps)$, which does not depend on $\ell$, such that
\[
\int_{-\frac{1}{K}}^{\frac{1}{K}}
\vert
S_{\ell}(\alpha) - T_{\ell}(\alpha)  
\vert^2 
\, \dx \alpha
\ll_{\ell}   N^{\frac{2}{\ell}-1}
\Bigl(
A(N, - c_{1})
+
\frac{K L^{2}}{N}
\Bigr),
\]
uniformly for  $N^{1-\frac{5}{6\ell}+\eps}\le K \le N$.  
Assuming further RH we get 
\[
\int_{-\frac{1}{K}}^{\frac{1}{K}}
\vert
S_{\ell}(\alpha) - T_{\ell}(\alpha) 
\vert^2 
\, \dx \alpha
\ll_{\ell} 
\frac{N^{\frac{1}{\ell}} L^{2}}{K} + K N^{\frac{2}{\ell}-2} L^{2},
\]
uniformly for  $N^{1-\frac{1}{\ell}}\le K \le N$.  
\end{lemma}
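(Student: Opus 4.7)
The plan is to bound the $L^2$ average of
\[
D_\ell(\alpha) := S_\ell(\alpha) - T_\ell(\alpha)
= \sum_{N/A \le m^\ell \le N} \bigl(\Lambda(m) - 1\bigr) e(m^\ell \alpha)
\]
on the arc $[-1/K, 1/K]$ via a Gallagher-type reduction to a mean-square estimate for primes in short intervals. Viewing $D_\ell$ as a Fourier series $\sum_n a_n e(n\alpha)$ with $a_n = \Lambda(n^{1/\ell}) - 1$ supported on $\ell$-th powers in $[N/A, N]$, Gallagher's inequality gives
\[
\int_{-1/K}^{1/K} \vert D_\ell(\alpha) \vert^2 \dx\alpha
\ll
\frac{1}{K^2} \int_\R \Bigl\vert \sum_{x < m^\ell \le x+K} (\Lambda(m) - 1) \Bigr\vert^2 \dx x.
\]
After the substitution $x = y^\ell$, $\dx x = \ell y^{\ell-1}\dx y$, the inner sum becomes $\psi(y+h) - \psi(y) - h + \Odim{L}$ with $h = h(y, K) \asymp K y^{1-\ell}$, so the problem reduces to a short-interval mean square of $\psi - \mathrm{id}$ with variable length $h$ and Jacobian $\ell y^{\ell-1} \asymp N^{(\ell-1)/\ell}$ for $y \asymp N^{1/\ell}$.

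Under RH, I would invoke Selberg's bound $\int_1^X \vert \psi(y+h) - \psi(y) - h \vert^2 \dx y \ll hX L^2$, valid for $1 \le h \le X$. Taking $X = N^{1/\ell}$ and $h \asymp K N^{1/\ell - 1}$, this yields, after multiplication by $K^{-2}$ and the Jacobian, a contribution of order $K^{-1} N^{1/\ell} L^2$, which is the first term of the RH bound. Unconditionally, I would invoke the Huxley-type short-interval mean-square estimate as refined by Bombieri--Chen--Pintz, which for $X^{1/6 + \eps} \le h \le X$ delivers a saving by a factor $A(X, -c_1)$ over the trivial $O(Xh^2)$ bound, with $c_1 > 0$ independent of $\ell$. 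Feeding this into the Gallagher framework produces the first term $N^{2/\ell - 1} A(N, -c_1)$ of the unconditional bound, and the constraint $h \ge X^{1/6 + \eps}$, once translated via $X = N^{1/\ell}$ and $h \asymp KN^{1/\ell - 1}$, becomes exactly $K \ge N^{1 - 5/(6\ell) + \eps}$ (after relabelling $\eps$).

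The secondary term $KN^{2/\ell - 2} L^2$, appearing in both statements, would be handled as a boundary contribution: when $x$ lies within $O(K)$ of the endpoints of the support $[N/A, N]$ of $a_n$, the interval $(x, x+K]$ only partially overlaps the summation range and the inner sum cannot be identified with a full short-interval count of the von Mangoldt function. Estimating this trivially by $\vert A(x,K) \vert \ll hL$ over an $O(K)$-length boundary region gives $\ll K^{-2} \cdot K \cdot h^2 L^2 \asymp KN^{2/\ell - 2}L^2$ after inserting $h \asymp KN^{1/\ell - 1}$, accounting for the secondary term in both inequalities. The main obstacle, and the source of the threshold in the hypothesis on $K$, is the restriction $h \ge X^{1/6 + \eps}$ in the Huxley--BCP mean square: below this, current zero-density technology for $\zeta(s)$ yields no saving over the trivial $O(Xh^2)$, and so the claimed uniformity range is the best one can hope for within this method.
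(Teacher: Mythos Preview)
The paper does not prove this lemma: it simply quotes it from Theorems~3.1--3.2 of \cite{LanguascoZ2013b} (see also Lemma~1 of \cite{LanguascoZ2016a}). Your sketch reproduces the standard argument that underlies those references: Gallagher's lemma converts the $L^2$ mean on $[-1/K,1/K]$ into an average of short-interval sums, the change of variable $x=y^\ell$ rewrites those as $\psi(y+h)-\psi(y)-h$ with $h\asymp K y^{1-\ell}$ and Jacobian $\asymp N^{1-1/\ell}$, and then one inputs Selberg's RH mean-square estimate or, unconditionally, the Huxley zero-density mean-square valid for $h\ge X^{1/6+\eps}$. Your arithmetic for the main and boundary contributions checks out.

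Two small points worth tightening. First, $h=h(y)$ is not constant on the integration range (it varies by a factor $\asymp A^{1-1/\ell}$), so to invoke the quoted mean-square bounds one should either dyadically dissect in $y$ or observe that the proofs of those bounds already tolerate $h$ varying within a bounded ratio; this is routine but should be said. Second, the attribution ``Bombieri--Chen--Pintz'' is nonstandard: the unconditional saving $A(N,-c_1)$ in the mean square comes from combining Huxley's zero-density estimate (source of the $1/6$ threshold) with the Vinogradov--Korobov zero-free region (source of the $(\log N/\log\log N)^{1/3}$ exponent), and the label ``BCP'' in the paper most likely points to a different set of authors. Neither issue affects the correctness of your approach.
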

 
 Combining the two previous lemmas we get
\begin{lemma}  
\label{App-BCP-Gallagher-2}
Let $\ell  > 0$ be a real number and $\eps$ be an arbitrarily small
positive constant. Then there exists a positive constant 
$c_1 = c_{1}(\eps)$, which does not depend on $\ell$, such that
\[
\int_{-\frac{1}{K}}^{\frac{1}{K}}
\vert
V_{\ell}(\alpha) - T_{\ell}(\alpha)  
\vert^2 
\, \dx \alpha
\ll_{\ell}   N^{\frac{2}{\ell}-1}
\Bigl(
A(N, - c_{1})
+
\frac{K L^{2}}{N}
\Bigr),
\]
uniformly for  $N^{1-\frac{5}{6\ell}+\eps}\le K \le N$.  
Assuming further RH we get 
\[
\int_{-\frac{1}{K}}^{\frac{1}{K}}
\vert
V_{\ell}(\alpha) - T_{\ell}(\alpha) 
\vert^2 
\, \dx \alpha
\ll_{\ell} 
\frac{N^{\frac{1}{\ell}} L^{2}}{K} + K N^{\frac{2}{\ell}-2} L^{2},
\]
uniformly for  $N^{1-\frac{1}{\ell}}\le K \le N$.  
\end{lemma}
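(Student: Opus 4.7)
The plan is to reduce this lemma to Lemma \ref{App-BCP-Gallagher} via the triangle inequality, using the fact that the difference $S_\ell - V_\ell$ (which consists only of contributions from genuine prime powers $p^k$ with $k \ge 2$) is already controlled pointwise by Lemma \ref{trivial-lemma}. Explicitly, I write
\[
V_\ell(\alpha) - T_\ell(\alpha) = \bigl( V_\ell(\alpha) - S_\ell(\alpha) \bigr) + \bigl( S_\ell(\alpha) - T_\ell(\alpha) \bigr),
\]
square, and apply $|a+b|^2 \le 2|a|^2 + 2|b|^2$ before integrating over $[-1/K, 1/K]$.

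For the first piece, Lemma \ref{trivial-lemma} gives $|V_\ell(\alpha) - S_\ell(\alpha)|^2 \ll_\ell N^{1/\ell}$ uniformly in $\alpha$, so its contribution to the integral is $\ll_\ell N^{1/\ell}/K$. For the second piece, Lemma \ref{App-BCP-Gallagher} is applied directly, yielding the two stated bounds (unconditional and under RH) in the corresponding ranges for $K$.

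The only thing to check is that the extra term $N^{1/\ell}/K$ is absorbed by the bounds from Lemma \ref{App-BCP-Gallagher}. Under RH, $N^{1/\ell}/K$ is dominated (with room to spare) by the term $N^{1/\ell} L^2/K$ already present. In the unconditional case, valid for $K \ge N^{1 - 5/(6\ell) + \eps}$, one has
\[
\frac{N^{1/\ell}}{K} \le N^{1/\ell - 1 + 5/(6\ell) - \eps} = N^{11/(6\ell) - 1 - \eps},
\]
and comparing with $N^{2/\ell - 1} A(N, -c_1)$ one obtains a saving of size $N^{-1/(6\ell) - \eps}$ times a power of $\log N$, so the extra term is subsumed in the main error. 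I expect no substantive obstacle: the statement is a straightforward corollary, and the proof should amount to a few lines combining the decomposition above with the two previous lemmas.
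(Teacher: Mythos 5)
Your proposal is correct and follows exactly the paper's own proof: the same decomposition of $V_\ell-T_\ell$ through $S_\ell$, the same inequality $|a+b|^2\le 2|a|^2+2|b|^2$, and the same invocation of Lemma \ref{trivial-lemma} and Lemma \ref{App-BCP-Gallagher}. The only difference is that you explicitly verify the absorption of the $N^{1/\ell}/K$ term, which the paper leaves to the reader; your check is accurate.
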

\begin{proof}
By Lemma \ref{trivial-lemma} 
we have that 
\[
\int_{-\frac{1}{K}}^{\frac{1}{K}}
\vert
S_{\ell}(\alpha) - V_{\ell}(\alpha) 
\vert^2 
\, \dx \alpha
\ll_\ell
\frac{N^{\frac{1}{\ell}}}{K}
\]
and the result follows
using the inequality $\vert a + b \vert^2 \le 2 \vert a  \vert^2 + 2\vert b \vert^2 $
and   Lemma \ref{App-BCP-Gallagher}.
\end{proof}

\begin{lemma} 
\label{zac-lemma}
Let $\ell\ge 2$ be an integer and $0<\xi\le  \frac{1}{2}$. Then
\[
\int_{-\xi}^{\xi} \vert T_{\ell}(\alpha) \vert ^2\, \dx \alpha 
\ll_\ell
 \xi N^{\frac{1}{\ell}} + 
\begin{cases}
L & \text{if}\ \ell =2\\
1 & \text{if}\ \ell > 2,
\end{cases}
\]
\[
\int_{-\xi}^{\xi} \vert S_{\ell}(\alpha) \vert ^2\, \dx \alpha 
\ll_\ell
N^{\frac{1}{\ell}} \xi L +
\begin{cases}
L^{2}& \text{if}\ \ell =2\\
1 & \text{if}\ \ell > 2
\end{cases}
\]
and
\[
\int_{-\xi}^{\xi} \vert V_{\ell}(\alpha) \vert ^2\, \dx \alpha 
\ll_\ell
N^{\frac{1}{\ell}} \xi L +
\begin{cases}
L^{2}& \text{if}\ \ell =2\\
1 & \text{if}\ \ell > 2.
\end{cases}
\]
\end{lemma}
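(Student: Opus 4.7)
The strategy for all three bounds is a Parseval-with-majorant argument. I would introduce a nonnegative trigonometric polynomial $F_\xi(\alpha)$, periodic of period $1$, that majorises the indicator function of $[-\xi,\xi]$ and has Fourier expansion $F_\xi(\alpha) = \sum_{|k|\le 1/\xi} c_k\, e(k\alpha)$ with coefficients satisfying $|c_k|\ll \xi$ uniformly in $k$; a Beurling--Selberg extremal majorant, or a suitably normalised Fej\'er kernel of degree $\lfloor 1/\xi\rfloor$, has these properties. Writing $W\in\{T_\ell,S_\ell,V_\ell\}$ with respective coefficients $a_m\in\{1,\Lambda(m),\log p\}$, positivity gives
\[
\int_{-\xi}^{\xi} |W(\alpha)|^2\,\dx\alpha
\le \int_{-1/2}^{1/2} |W(\alpha)|^2 F_\xi(\alpha)\,\dx\alpha
= \sum_{|k|\le 1/\xi} c_k \sum_{m_2^\ell - m_1^\ell = k} a_{m_1}\,\overline{a_{m_2}}.
\]
The diagonal $k=0$ contributes $c_0 \sum_m |a_m|^2$, producing the claimed main term: $\Odim{\xi N^{1/\ell}}$ for $T_\ell$ (counting the elements of the range of $m$), and $\Odim{\xi N^{1/\ell} L}$ for $S_\ell$ and $V_\ell$ via Chebyshev-type bounds on $\sum \Lambda(m)^2$ and $\sum (\log p)^2$.

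For the off-diagonal, I would count solutions to $m_2^\ell - m_1^\ell = k$ with $0 < |k|\le 1/\xi$. When $\ell=2$, the factorisation $k = (m_2-m_1)(m_2+m_1)$ bounds the number of representations by $d(|k|)$; combined with $\sum_{k\le 1/\xi} d(k)\ll (1/\xi)\log(1/\xi)$ and $|c_k|\ll\xi$, this gives an $\Odim{L}$ off-diagonal for $T_\ell$. For $V_\ell$, the prime weights are handled by the sharper estimate $\sum_p (\log p)^2/p \ll L^2$ (rather than a trivial $(\log p)^2\le L^2$), producing the $\Odim{L^2}$ term; the bound for $S_\ell$ then follows either by the same analysis with $\Lambda$ or, more cheaply, from the $V_\ell$ bound combined with Lemma~\ref{trivial-lemma}, which gives $|V_\ell - S_\ell|\ll N^{1/(2\ell)}$ and hence an $\Odim{\xi N^{1/\ell}}$ contribution absorbed into the main term. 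When $\ell\ge 3$, the inequality $m_2^\ell - m_1^\ell \ge (m_2-m_1)\,m_2^{\ell-1}$ shows that the pair count with $|k|\le 1/\xi$ is at most $\sum_{m_1\ge 1} 1/(\xi\, m_1^{\ell-1})\ll 1/\xi$ (the series converging because $\ell-1\ge 2$), so after multiplication by $\xi$ the off-diagonal is $\Odim{1}$.

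The main technical obstacle is securing the sharp logarithmic dependence in the $\ell=2$ case: a direct sinc expansion of the mean square gives only $\Odim{L^2}$ for $T_\ell$, so the strict frequency truncation $|k|\le 1/\xi$ furnished by a Selberg-type majorant is essential to trim the bound to $\Odim{L}$. Analogously, avoiding an $\Odim{L^3}$ error in $V_\ell$ requires exploiting the logarithmic density of primes through $\sum_p (\log p)^2/p\ll L^2$ instead of the crude bound $\log p\le L$ on each prime weight.
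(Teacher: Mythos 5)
Your strategy (a Beurling--Selberg or Fej\'er majorant of the indicator of $[-\xi,\xi]$, positivity, then a direct count of solutions of $m_2^\ell - m_1^\ell = k$ with $|k|\le 1/\xi$) is genuinely different from the paper's. The paper simply cites an earlier lemma for the $T_\ell$ and $S_\ell$ parts, and for $V_\ell$ invokes Corollary~2 of Montgomery--Vaughan's ``Hilbert's inequality'' paper with frequencies $\lambda_r=2\pi r^\ell$ and coefficients $a_r=\log r\cdot\mathbf{1}_{r\text{ prime}}$: the gap estimate $\delta_r\gg_\ell r^{\ell-1}$ then produces $\sum_r a_r^2(\xi+\Odim{\delta_r^{-1}})$ in one stroke, and the error series $\sum_{p^\ell\le N}(\log p)^2 p^{1-\ell}$ converges for $\ell>2$ and is $\ll L^2$ for $\ell=2$. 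Your route is more elementary and self-contained but shifts the burden onto explicit solution counting.

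There is, however, a real gap in your treatment of $\ell\ge 3$ for the \emph{weighted} sums $V_\ell$ and $S_\ell$. You bound only the \emph{pair count} of $(m_1,m_2)$ with $0<|m_2^\ell-m_1^\ell|\le 1/\xi$ by $\sum_{m_1\ge 1}1/(\xi m_1^{\ell-1})\ll 1/\xi$, and conclude an $\Odim{1}$ off-diagonal after multiplying by $|c_k|\ll\xi$. That is correct for $T_\ell$, where each pair has weight $1$. For $V_\ell$ each pair carries weight $\log p_1\log p_2\ll L^2$, so the same reckoning only gives $\Odim{L^2}$, not the asserted $\Odim{1}$. To repair this you must use the constraint $m_1\ge (N/A)^{1/\ell}$ built into the definitions in \eqref{main-defs}: then $\sum_{m_1\ge (N/A)^{1/\ell}} m_1^{1-\ell}\ll_\ell (N/A)^{(2-\ell)/\ell}$, which for $\ell\ge 3$ beats the $L^2$ weight (recall $A=N^{\odi{1}}$). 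The Montgomery--Vaughan route sidesteps this because $\sum_p(\log p)^2 p^{1-\ell}$ already converges absolutely for $\ell\ge 3$. A small further slip: the lower bound should read $m_2^\ell-m_1^\ell\ge \ell\,(m_2-m_1)\,m_1^{\ell-1}$ (with the smaller variable $m_1$ in the power), not $m_2^{\ell-1}$; your subsequent sum uses $m_1^{\ell-1}$, so the intent is clear but the displayed inequality is wrong as written. The $\ell=2$ analysis (divisor bound for $T_2$, Mertens-type estimate $\sum_{p}(\log p)^2/p\ll L^2$ for $V_2$, and reduction of $S_2$ to $V_2$ via Lemma~\ref{trivial-lemma}) is sound.
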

\begin{proof}
The first two parts were proved in Lemma 1 of \cite{LanguascoZ2017a}.
Let's see the third part.
By symmetry we can  integrate over $[0,\xi]$.
We use Corollary 2 of Montgomery-Vaughan  \cite{MontgomeryV1974}  
with $T=\xi$, $a_r= \log r$ if $r$ is prime,  $a_r= 0$ otherwise 
and $\lambda_r= 2\pi r^\ell$ thus getting
\begin{align*}
\int_{0}^{\xi} \vert V_{\ell}(\alpha) \vert ^2\, \dx \alpha 
&= \!\!\!\!
\sum_{N/A \le r^{\ell} \le N} a(r)^2 \bigl(\xi +\Odim{\delta_r^{-1}}\bigr)
\ll_{\ell}
N^{\frac{1}{\ell}} \xi L +
\sum_{p^{\ell} \le N}  (\log p)^2  p^{1-\ell},
\end{align*}
since $\delta_{r} = \lambda_r - \lambda_{r-1} \gg_{\ell} r^{\ell-1}$. 
The last error term is $\ll_{\ell}1$ if $\ell >2$ and $\ll L^2$ otherwise. %
The third part of Lemma \ref{zac-lemma} follows. 
\end{proof}

 \begin{lemma}
 \label{media-f-ell}
Let $\ell >0$ be a real number and recall that $A$ is defined in \eqref{A-def}. Then
\[
\int_{-\frac{1}{2}}^{\frac{1}{2}} \vert f_{\ell}(\alpha) \vert ^2\, \dx \alpha 
\ll_{\ell} 
N^{\frac{2}{\ell}-1}
\begin{cases}
A^{1-\frac{2}{\ell}} & \text{if}\ \ell > 2\\
\log A & \text{if}\ \ell = 2\\
1 & \text{if}\ 0<\ell < 2.
\end{cases}
\]
 \end{lemma}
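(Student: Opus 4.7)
The plan is to apply Parseval's identity directly, since $f_{\ell}$ is given explicitly as a trigonometric polynomial. Writing $f_{\ell}(\alpha) = \sum_{m \in \Z} a_m\, e(m\alpha)$ with $a_m = \tfrac{1}{\ell} m^{1/\ell - 1}$ for $N/A \le m \le N$ and $a_m = 0$ otherwise, Parseval's formula gives
\[
\int_{-1/2}^{1/2} |f_{\ell}(\alpha)|^2 \, \dx\alpha \;=\; \frac{1}{\ell^2} \sum_{N/A \le m \le N} m^{2/\ell - 2}.
\]
So the problem reduces to an elementary estimate of the power sum on the right, and the factor $1/\ell^2$ can be absorbed into the $\ll_{\ell}$ notation.

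Next I would compare the sum to the integral $\int_{N/A}^{N} t^{2/\ell - 2}\, \dx t$ by monotonicity, splitting into the three natural cases according to the sign of the exponent $2/\ell - 1$. When $\ell > 2$ the exponent is negative, and evaluating the antiderivative $t^{2/\ell - 1}/(2/\ell - 1)$ between the endpoints shows the integral is dominated by its value at the lower endpoint $N/A$, yielding $\ll_{\ell} (N/A)^{2/\ell - 1} = N^{2/\ell - 1} A^{1 - 2/\ell}$. When $\ell = 2$ the integrand is $t^{-1}$ and the integral is exactly $\log A$, while the prefactor $N^{2/\ell - 1}$ becomes $1$. When $0 < \ell < 2$ the exponent $2/\ell - 1$ is positive, so the integral is dominated by its value at the upper endpoint $N$, yielding $\ll_{\ell} N^{2/\ell - 1}$.

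The only minor point of care is the boundary correction between the sum and the integral: one should add an $\Odim{(N/A)^{2/\ell - 2}}$ term to account for the initial term of the sum, but in all three cases this is absorbed into the main bound stated in the lemma. There is no serious obstacle here; the lemma is essentially a direct computation via Parseval combined with the standard trichotomy for the asymptotics of $\sum m^{s}$ depending on the sign of $s+1$.
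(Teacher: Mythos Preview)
Your proposal is correct and follows exactly the same approach as the paper: apply Parseval's identity to reduce the $L^2$-norm to $\frac{1}{\ell^2}\sum_{N/A\le m\le N} m^{2/\ell-2}$, then estimate this power sum elementarily. The paper's proof is stated in two lines and simply says ``the lemma follows at once'' after writing down the Parseval identity, so your write-up in fact supplies more detail than the original.
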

 \begin{proof}
 By Parseval's theorem we have
 \[
 \int_{-\frac12}^{\frac12} \vert f_{\ell}(\alpha) \vert ^2\, \dx \alpha 
 = 
\frac{1}{\ell^2} \sum_{N/A \le m \le N} m^{\frac{2}{\ell}-2}
 \]
and the lemma follows at once.
 \end{proof}
 
 We also need similar lemmas for the Hardy-Littlewood functions since, in the conditional case,
 we will use them. Let
\begin{equation*} 
\Stilde_\ell(\alpha) = \sum_{n=1}^{\infty} \Lambda(n) e^{-n^{\ell}/N} e(n^{\ell}\alpha),  \quad
\Vtilde_\ell(\alpha) = \sum_{p=2}^{\infty} \log p \, e^{-p^{\ell}/N} e(p^{\ell}\alpha),
\end{equation*} 
\textrm{and}
\begin{equation*} 
z= 1/N-2\pi i\alpha.
\end{equation*} 
We remark that 
\begin{equation}
\label{z-estim}
\vert z\vert ^{-1} \ll \min \bigl(N, \vert \alpha \vert^{-1}\bigr).
\end{equation}
 \begin{lemma}[Lemma 3 of  \cite{LanguascoZ2016b}]
\label{tilde-trivial-lemma}
Let $\ell\ge 1$ be an integer. Then
\(
\vert \Stilde_{\ell}(\alpha)- \Vtilde_{\ell}(\alpha) \vert 
\ll_{\ell}
 N^{\frac{1}{2\ell}}  .
\)
\end{lemma}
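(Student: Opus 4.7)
The plan is to mirror the argument for the unweighted version (Lemma \ref{trivial-lemma}), with the exponential factor $e^{-n^{\ell}/N}$ replacing the hard cutoff. First I would use the definition of $\Lambda$ to write
\[
\Stilde_{\ell}(\alpha)-\Vtilde_{\ell}(\alpha)
=\sum_{k\ge 2}\sum_{p}(\log p)\, e^{-p^{k\ell}/N}\, e(p^{k\ell}\alpha),
\]
since the $k=1$ contribution is exactly $\Vtilde_{\ell}(\alpha)$. Taking absolute values and pulling in the modulus of the exponential phase, this reduces matters to bounding the real sum $\sum_{p}(\log p)\, e^{-p^{k\ell}/N}$ for each $k\ge 2$, then summing a geometric-type series in $k$.

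Second, I would estimate the inner sum for fixed $k$. The weight $e^{-p^{k\ell}/N}$ has effective cutoff at $p^{k\ell}\asymp N$, i.e.\ $p\asymp N^{1/(k\ell)}$, so by a Chebyshev bound $\sum_{p\le x}\log p\ll x$ combined with partial summation, one gets
\[
\sum_{p}(\log p)\, e^{-p^{k\ell}/N}
\ll_{\ell}\; N^{1/(k\ell)},
\]
since the substitution $u=p^{k\ell}/N$ converts the integral into $N^{1/(k\ell)}\int_{0}^{\infty} u^{1/(k\ell)}e^{-u}\,\dx u$, a finite Gamma-integral depending only on $k,\ell$.

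Third, I would sum over $k\ge 2$. The dominant term is $k=2$, giving $N^{1/(2\ell)}$, and for $k\ge 3$ the remaining terms satisfy $N^{1/(k\ell)}\le N^{1/(3\ell)}$, which is much smaller; they form a rapidly converging series, so the total is $\ll_{\ell} N^{1/(2\ell)}$. One should also truncate at $k\ll L$ because once $2^{k\ell}>N$ only a handful of small primes contribute and the exponential weight makes the tail negligible.

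The proof is essentially routine, so there is no real obstacle; the only mild subtlety is handling the tail $p^{k\ell}>N$ of each inner sum, where the rapid decay of $e^{-p^{k\ell}/N}$ together with the Gamma-integral computation shows the tail contributes the same order as the main range. This is what replaces the hard cutoff $p^{k\ell}\le N$ used in the proof of Lemma \ref{trivial-lemma}, and it is the only place where the argument for $\Stilde_{\ell}-\Vtilde_{\ell}$ differs meaningfully from the argument for $S_{\ell}-V_{\ell}$.
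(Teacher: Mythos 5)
Your proposal is essentially correct, and it mirrors the paper's own proof of the unweighted analogue (Lemma~\ref{trivial-lemma}), adapted in the natural way to handle the smooth exponential cutoff; the paper itself does not reprove Lemma~\ref{tilde-trivial-lemma} but simply cites it, and your sketch is exactly what that citation is standing in for. The decomposition into prime powers with $k\ge 2$, the Chebyshev/PNT bound $\sum_{p\le x}\log p\ll x$, and the dominance of the $k=2$ term are precisely the ingredients the paper uses for $S_\ell-V_\ell$.

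One phrase to tighten: the terms $N^{1/(k\ell)}$ do not form a ``rapidly converging series'' --- they tend to $1$ as $k\to\infty$, so $\sum_{k\ge 2}N^{1/(k\ell)}$ actually diverges. What saves the argument is exactly what you note at the end: only the range $k\ll L$ matters (for $k$ beyond that, $2^{k\ell}>N$ and the exponential weight, or equivalently the nonzero lower limit $2^{k\ell}/N$ in your Gamma integral, kills everything). With that truncation the sum is $\ll N^{1/(2\ell)}+L\,N^{1/(3\ell)}\ll N^{1/(2\ell)}$, matching the paper's integral bound $\int_2^{\Odi{L}}N^{1/(t\ell)}\,\dx t\ll_\ell N^{1/(2\ell)}$. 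So the idea is right; just replace the ``rapidly converging series'' justification with the truncation plus the $L\,N^{1/(3\ell)}\ll N^{1/(2\ell)}$ comparison.
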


\begin{lemma}[Lemma 2 of  \cite{LanguascoZ2016a}]
Let $\ell \ge 1$ be an integer, $N \ge 2$  and $\alpha\in [-1/2,1/2]$.
Then
\begin{equation*}
\Stilde_{\ell}(\alpha)  
= 
\frac{\Gamma(1/\ell)}{\ell z^{\frac{1}{\ell}}}
- 
\frac{1}{\ell}\sum_{\rho}z^{-\frac{\rho}{\ell}}\Gamma\Bigl(\frac{\rho}{\ell}\Bigr) 
+
\Odip{\ell}{1},
\end{equation*}
where $\rho=\beta+i\gamma$ runs over
the non-trivial zeros of $\zeta(s)$.
\end{lemma}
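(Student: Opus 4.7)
The plan is to establish the formula via Mellin inversion applied to the exponential weight. Starting from the identity
\[
e^{-u} = \frac{1}{2\pi i}\int_{(c)} \Gamma(s)\, u^{-s}\, \dx s,
\]
valid for any $c>0$ and $\mathrm{Re}(u)>0$, I would apply it with $u=n^{\ell}z$ (legitimate since $\mathrm{Re}(z)=1/N>0$), insert it into the series defining $\Stilde_\ell$, and swap sum and integral (justified by absolute convergence whenever $c>1/\ell$). Using the Dirichlet series $\sum_{n\ge 1}\Lambda(n)n^{-\ell s}=-\zeta'(\ell s)/\zeta(\ell s)$ and the substitution $w=\ell s$, this yields
\[
\Stilde_\ell(\alpha) = -\frac{1}{2\pi i\,\ell}\int_{(\ell c)} \Gamma\Bigl(\frac{w}{\ell}\Bigr) z^{-w/\ell} \frac{\zeta'}{\zeta}(w)\, \dx w,
\]
with $\ell c>1$.

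The next step is to shift the contour leftward to, say, $\mathrm{Re}(w)=-1/2$, collecting the residues at the poles of the integrand crossed during the shift. These are: (i) the simple pole of $\zeta'/\zeta$ at $w=1$ with residue $-1$, which after the overall factor $-1/\ell$ produces the main term $\Gamma(1/\ell)\,z^{-1/\ell}/\ell$; (ii) the simple poles of $\zeta'/\zeta$ at the non-trivial zeros $w=\rho$ of $\zeta$, each with residue $+1$, which together contribute $-\frac{1}{\ell}\sum_\rho \Gamma(\rho/\ell)\,z^{-\rho/\ell}$, matching the stated sum; (iii) the simple pole of $\Gamma(w/\ell)$ at $w=0$, which combines with the finite value $\zeta'(0)/\zeta(0)=\log(2\pi)$ and $z^{0}=1$ to yield a bounded quantity absorbed into $\Odip{\ell}{1}$. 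To make sense of (ii), the shift must be performed between consecutive ordinates of the non-trivial zeros and a limit as $T\to\infty$ taken, so that the sum over $\rho$ is interpreted as $\lim_{T\to\infty}\sum_{\vert\gamma\vert\le T}$.

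The main technical obstacle is bounding by $\Odip{\ell}{1}$ both the shifted integral on the line $\mathrm{Re}(w)=-1/2$ and the residues at the trivial zeros $w=-2k$ with $k\ge 1$, possibly picked up during the shift when $\ell$ is large. For the integral, one combines Stirling's formula (giving exponential decay of $\Gamma(w/\ell)$ in $\vert\mathrm{Im}(w)\vert$) with the classical bound $\zeta'(w)/\zeta(w)\ll \log(\vert\mathrm{Im}(w)\vert+2)$ on vertical lines away from the zeros, together with $\vert z^{-w/\ell}\vert\ll_\ell 1$, which follows from the boundedness of $\vert z\vert$ for $\alpha\in[-1/2,1/2]$. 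For the trivial zeros, $\Gamma(-2k/\ell)$ is a constant (with a minor adjustment using the functional equation of $\zeta$ locally at those $k$ for which $2k/\ell$ is a non-negative integer) while $\vert z^{2k/\ell}\vert$ remains bounded, so the resulting discrete contributions sum to a quantity that is again $\Odip{\ell}{1}$. Piecing everything together yields the asserted identity.
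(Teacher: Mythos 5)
Your overall strategy (Mellin inversion, identification with $-\zeta'/\zeta$, contour shift, residue bookkeeping) is the same as that of Lemma~2 of \cite{LanguascoZ2016a}, to which the paper's proof essentially defers. However, there is a concrete gap in the crucial estimate for the shifted integral. You write that $\vert z^{-w/\ell}\vert \ll_\ell 1$ ``follows from the boundedness of $\vert z\vert$''. This is false: writing $w=\sigma+it$ and $\log z=\log\vert z\vert + i\theta_0$ with $\theta_0=\arg z$, one has
\begin{equation*}
\vert z^{-w/\ell}\vert = \vert z\vert^{-\sigma/\ell}\, e^{t\theta_0/\ell},
\end{equation*}
and since $z=1/N-2\pi i\alpha$ can have $\vert\theta_0\vert$ as close to $\pi/2$ as $1/(\pi N)$ when $\vert\alpha\vert$ is not small, the factor $e^{t\theta_0/\ell}$ grows like $e^{(\pi/2-O(1/N))\vert t\vert/\ell}$ in the relevant half-line. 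This almost exactly cancels the exponential decay $e^{-\pi\vert t\vert/(2\ell)}$ coming from Stirling for $\Gamma(w/\ell)$, leaving only a very weak decay $e^{-O(\vert t\vert/(N\ell))}$. On the line $\mathrm{Re}(w)=-1/2$ the residual algebraic factor from Stirling is $\vert t\vert^{-1/(2\ell)-1/2}$, and since $1/(2\ell)+1/2\le 1$, the integral $\int \vert t\vert^{-1/(2\ell)-1/2}e^{-O(\vert t\vert/(N\ell))}\log(\vert t\vert+2)\,\dx t$ is of size a positive power of $N$ (times logarithms), not $\Odip{\ell}{1}$. So the error term is not controlled as claimed.

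The fix is to push the contour further to the left, e.g.\ to $\mathrm{Re}(s)=-\sqrt{3}/2$ in the $s$-variable (equivalently $\mathrm{Re}(w)=-\ell\sqrt{3}/2$), so that the algebraic factor $\vert t\vert^{\sigma-1/2}$ from Stirling, with $\sigma=-\sqrt{3}/2$, is integrable on its own without relying on the (almost absent) exponential decay. This deeper shift is exactly why residues at the trivial zeros $w=-2m$ with $1\le m\le \ell\sqrt{3}/4$ must be collected, giving the term $-\sum_{m\le\ell\sqrt{3}/4}\Gamma(-2m/\ell)z^{2m/\ell}$ whose omission in the original \cite{LanguascoZ2016a} is precisely the oversight the present paper corrects (the bound $\ll_\ell\vert z\vert^{\sqrt{3}/2}\ll_\ell 1$ follows because $\vert\Gamma(-2m/\ell)\vert$ decays superexponentially in $m$ and $\vert z\vert$ is bounded). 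You anticipated the trivial zeros vaguely but attached them to the wrong cause (``when $\ell$ is large'') and left the integral bound as the main thing to be checked, where the actual argument breaks down. Also, when $\ell\mid 2m$ the pole of $\Gamma(w/\ell)$ and the trivial zero coincide and produce a double pole; this needs a short separate treatment that the proposal does not mention.
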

\begin{proof}
It follows the line of  Lemma 2 of  \cite{LanguascoZ2016a}; we just 
correct an oversight in its  proof. In eq. (5) on page 48 of 
\cite{LanguascoZ2016a} the term 
\(
-  
  \sum_{m=1}^{\ell \sqrt{3}/4} \Gamma (- 2m/\ell ) z^{2m/\ell}
\)
is missing. Its estimate is trivially $\ll_{\ell} \vert z \vert^{\sqrt{3}/2} \ll_{\ell} 1$.
Hence such an oversight does not affect the final result of  
Lemma 2 of  \cite{LanguascoZ2016a}.
\end{proof}

\begin{lemma} [Lemma 4 of \cite{LanguascoZ2016a}]
 \label{Laplace-formula}
Let $N$ be a positive integer, $z=1/N-2\pi i \alpha$, $\alpha\in [-1/2,1/2]$, and 
$\mu > 0$.
Then
\[
  \int_{-1 / 2}^{1 / 2} z^{-\mu} e(-n \alpha) \, \dx \alpha
  =
  e^{- n / N} \frac{n^{\mu - 1}}{\Gamma(\mu)}
  +
  \Odipg{\mu}{\frac{1}{n}},
\]
uniformly for $n \ge 1$.
\end{lemma}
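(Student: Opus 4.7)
The plan is to reduce the integral to a classical Hankel/Laplace inversion formula by first extending the range of integration to the whole real line and then controlling the tail. The key identity is
\[
\frac{1}{2\pi i}\int_{c-i\infty}^{c+i\infty} s^{-\mu} e^{ns}\,\dx s = \frac{n^{\mu-1}}{\Gamma(\mu)}
\qquad (c>0,\ n\ge 1,\ \mu>0),
\]
taken with the principal branch of $s^{-\mu}$, which is legitimate here since $\operatorname{Re}(z)=1/N>0$ throughout.

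First I would change variables. Setting $s=z=1/N-2\pi i\alpha$, so that $\dx \alpha=-\dx s/(2\pi i)$, and noting that $e(-n\alpha)=e^{ns}e^{-n/N}$, one gets
\[
\int_{-\infty}^{\infty} z^{-\mu} e(-n\alpha)\,\dx \alpha
=
\frac{e^{-n/N}}{2\pi i}\int_{1/N-i\infty}^{1/N+i\infty} s^{-\mu} e^{ns}\,\dx s
=
e^{-n/N}\,\frac{n^{\mu-1}}{\Gamma(\mu)},
\]
using the formula above with $c=1/N$.

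Next I would estimate the tail
\[
\int_{|\alpha|>1/2} z^{-\mu} e(-n\alpha)\,\dx \alpha.
\]
For $|\alpha|>1/2$ we have $|z|\ge 2\pi|\alpha|$, so $z^{-\mu}$ is integrable but only yields $O_\mu(1)$; to extract the required $O_\mu(1/n)$ factor I would integrate by parts, differentiating $z^{-\mu}$ (which gives $2\pi i\mu\, z^{-\mu-1}$) and integrating $e(-n\alpha)$ (which supplies the $1/n$). The boundary contribution at $\alpha=\pm 1/2$ is $O_\mu(1/n)$ because $|z|$ is bounded below by a positive constant there, and the boundary term at $\pm\infty$ vanishes since $\mu>0$. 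The remaining integral is $\ll_\mu (1/n)\int_{1/2}^{\infty}\alpha^{-\mu-1}\,\dx\alpha\ll_\mu 1/n$.

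The only subtlety is keeping track of the branch of $z^{-\mu}$ when shifting contours; this is not a real obstacle because $\operatorname{Re}(z)=1/N$ stays strictly in the right half-plane, so the principal branch is analytic throughout and the vertical contour $\operatorname{Re}(s)=1/N$ is admissible in Hankel's formula. The rest is bookkeeping: combining the full-line evaluation with the tail estimate yields exactly the stated asymptotic with error $O_\mu(1/n)$, uniformly in $n\ge 1$.
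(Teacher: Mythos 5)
Your proof is correct and follows essentially the same route as the source the paper cites (Lemma 4 of \cite{LanguascoZ2016a}, for which the paper gives no independent proof): reduce to the Hankel/Laplace inversion formula $\frac{1}{2\pi i}\int_{(c)} s^{-\mu}e^{ns}\,\dx s = n^{\mu-1}/\Gamma(\mu)$ on the vertical line $\operatorname{Re}(s)=1/N$ and then dispose of the tail $|\alpha|>1/2$ by one integration by parts, using $|z|\ge 2\pi|\alpha|$ and $\mu>0$. The bookkeeping is sound, including the observation that $\operatorname{Re}(z)>0$ keeps the principal branch analytic and that the boundary term at infinity vanishes for every $\mu>0$.
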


\begin{lemma}[Lemma 3 of \cite{LanguascoZ2016a} and 
Lemma 1 of \cite{LanguascoZ2016b}]
 \label{LP-Lemma-gen} 
Let $\eps$ be an arbitrarily small
positive constant,  $\ell \ge 1$ be an integer, $N$ be a
sufficiently large integer and $L= \log N$. Then there exists a positive constant 
$c_1 = c_{1}(\eps)$, which does not depend on $\ell$, such that 
\[
\int_{-\xi}^{\xi} \,
\Bigl\vert
\Stilde_\ell(\alpha) - \frac{\Gamma(1/\ell)}{\ell z^{\frac{1}{\ell}}}
\Bigr\vert^{2}
\dx \alpha 
\ll_{\ell}
 N^{\frac{2}{\ell}-1} A(N, - c_{1}) 
\]
uniformly for $ 0\le \xi < N^{-1 +5/(6\ell) - \eps}$.
Assuming RH we get 
\[
\int_{-\xi}^{\xi} \,
\Bigl\vert
\Stilde_\ell(\alpha) - \frac{\Gamma(1/\ell)}{\ell z^{\frac{1}{\ell}}}
\Bigr\vert^{2}
\dx \alpha 
\ll_{\ell}
N^{\frac{1}{\ell}}\xi L^{2}
\]
uniformly  for  $0 \le \xi \le \frac{1}{2}$.
\end{lemma}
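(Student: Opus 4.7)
The plan is to reduce everything to the explicit-formula identity proved in the previous lemma. That lemma gives
\[
\Stilde_\ell(\alpha)-\frac{\Gamma(1/\ell)}{\ell z^{1/\ell}}
=-\frac{1}{\ell}\sum_{\rho}z^{-\rho/\ell}\Gamma\Bigl(\frac{\rho}{\ell}\Bigr)+\Odip{\ell}{1},
\]
so after squaring, integrating, and absorbing the $\Odip{\ell}{1}$ into a term that is $\Odip{\ell}{\xi}$ (harmless in both ranges), the problem collapses to bounding
\[
I(\xi):=\int_{-\xi}^{\xi}\Bigl|\sum_{\rho}z^{-\rho/\ell}\Gamma(\rho/\ell)\Bigr|^{2}\dx\alpha.
\]

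The first step is to control each summand pointwise. Stirling's formula yields
$|\Gamma(\rho/\ell)|\ll e^{-\pi|\gamma|/(2\ell)}(|\gamma|+1)^{\beta/\ell-1/2}$,
and writing $z=1/N-2\pi i\alpha$ one has $|z|\asymp\max(1/N,|\alpha|)$ and $|\arg z|<\pi/2$, so $|z^{-\rho/\ell}|\ll|z|^{-\beta/\ell}\exp(|\gamma|\,|\arg z|/\ell)$. The product therefore carries an exponential decay factor $\exp((|\arg z|-\pi/2)|\gamma|/\ell)$, which effectively truncates the sum to $|\gamma|\ll(\pi/2-|\arg z|)^{-1}\log N$ and justifies all interchanges below.

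The second step is to expand $|\,\cdot\,|^{2}$ into a double sum over pairs $(\rho_1,\rho_2)$ and exchange the integral with the sums. In the RH case, the substitution $\beta_{j}=1/2$ gives the uniform pointwise bound $|z^{-\rho_j/\ell}|\ll|z|^{-1/(2\ell)}$, and then $\int_{-\xi}^{\xi}|z|^{-1/\ell}\dx\alpha\ll N^{1/\ell-1}+\xi^{1-1/\ell}$; combining this with the Riemann--von Mangoldt estimate $\#\{\rho:|\gamma|\le T\}\ll T\log T$ (applied after truncation at $T\asymp L$) and Stirling on the $\Gamma$-factors yields the conditional bound $\ll_\ell N^{1/\ell}\xi L^{2}$. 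In the unconditional case one separates zeros by their real part: zeros in the Vinogradov--Korobov zero-free region contribute the main saving through the factor $N^{-\sigma}$ with $\sigma=1-c_0/(\log N)^{2/3}(\log\log N)^{1/3}$, which produces the $A(N,-c_1)$ savings, while zeros to the right of this region are controlled by a classical zero-density estimate (e.g. Gronwall--Ingham type) together with the range restriction $\xi<N^{-1+5/(6\ell)-\eps}$.

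The main obstacle, in both cases, is the bookkeeping for the off-diagonal contributions $\rho_1\neq\rho_2$: one must show that the phases $|z|^{-i(\gamma_1-\gamma_2)/\ell}$ average out or are tamed by the gap between consecutive $\gamma$'s (via a Gallagher-type lemma or partial summation against $N(T+1)-N(T)$). Once this is in place, the $\Odip{\ell}{1}$ tail from the explicit formula is dwarfed and the stated bounds follow. Note that this is precisely the content of Lemma 3 of \cite{LanguascoZ2016a} and Lemma 1 of \cite{LanguascoZ2016b}, from which the statement may be quoted directly.
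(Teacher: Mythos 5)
Your outline correctly starts from the explicit formula (the paper's preceding lemma) and correctly lists the relevant inputs: RH, the Vinogradov--Korobov zero-free region, zero-density estimates, and a Gallagher-type $L^2$ estimate. But as written, the RH case does not yield the stated bound. For $1/N < |\alpha| \le \xi$ one has $|z|\asymp|\alpha|$ and $\pi/2-|\arg z|\asymp (N|\alpha|)^{-1}$, so the effective truncation of the zero sum is at $T\asymp N|\alpha|$, not at $T\asymp L$ as you assert in the RH paragraph (this also contradicts the truncation $|\gamma|\ll(\pi/2-|\arg z|)^{-1}\log N$ you state earlier). The pointwise $\ell^1$-type bound under RH is then
\[
|z|^{-\frac{1}{2\ell}}\sum_{|\gamma|\le T}(|\gamma|+1)^{\frac{1}{2\ell}-\frac12}\log|\gamma|
\asymp N^{\frac{1}{2\ell}+\frac12}|\alpha|^{\frac12}\log N ,
\]
and squaring and integrating over $[1/N,\xi]$ gives $\asymp N^{\frac{1}{\ell}+1}\xi^{2}L^{2}$, which is a factor $N\xi$ larger than the claimed $N^{1/\ell}\xi L^{2}$ (a genuine loss as soon as $\xi\gg 1/N$). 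This excess can only be recovered by exploiting $L^2$ cancellation among the phases $|z|^{-i\gamma/\ell}$, which you relegate to a parenthetical "Gallagher-type lemma" but which is in fact the crux.

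There is a further point you miss: the paper's proof of this lemma is \emph{not} a plain citation. It explicitly corrects an oversight in the cited proofs, namely that the integral over $[1/N,\xi]$ must first be split into $K=\Odi{L}$ dyadic subintervals $[\eta_k,2\eta_k]$ with $\eta_k=\xi/2^k$, and the Gallagher step applied on each piece separately. Your sketch contains no such dyadic decomposition in $\alpha$, and without it the argument cannot close. Similarly, in the unconditional case you invoke Vinogradov--Korobov and a zero-density estimate but give no indication of how the range restriction $\xi<N^{-1+5/(6\ell)-\eps}$ enters; that restriction is precisely what makes the density argument converge, and it cannot be left implicit.
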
 
\begin{proof}
It follows the line of Lemma 3 of \cite{LanguascoZ2016a} and 
Lemma 1 of \cite{LanguascoZ2016b}; we just 
correct an oversight in their  proofs. Both eq. (8) on page 49 of 
\cite{LanguascoZ2016a}  and eq. (6) on page 423
of \cite{LanguascoZ2016b} should read as 
\[
\int_{1/N}^{\xi}
\Big \vert\sum_{\rho\colon \gamma > 0}z^{-\rho/\ell}\Gamma (\rho/\ell ) \Big \vert^2 
\dx \alpha 
\le
\sum_{k=1}^K
\int_\eta^{2\eta} \Big \vert\sum_{\rho\colon \gamma > 0}z^{-\rho/\ell}\Gamma (\rho/\ell ) \Big \vert^2 \dx \alpha,
\]
where $\eta=\eta_k= \xi/2^k$, $1/N\le \eta \le \xi/2$  and $K$ is a suitable integer satisfying $K=\Odi{L}$. 
The remaining part of the proofs are left untouched. 
Hence such  oversights do not affect the final result of  
Lemma 3 of \cite{LanguascoZ2016a} and 
Lemma 1 of \cite{LanguascoZ2016b}.
\end{proof}
\begin{remark}
The main difference between  Lemma \ref{LP-Lemma-gen}
and Lemma \ref{App-BCP-Gallagher-2} is the larger uniformity 
over $\xi$ in the conditional estimate.
Hence, under the assumption of RH, 
Lemma \ref{LP-Lemma-gen} will allow us to avoid
the unit interval splitting (see \eqref{dissect}
below). This will lead to milder conditions
on $H$  than something like 
$N^{1-\frac{1}{\ell_1}}B\le H \le N$ which Lemma \ref{App-BCP-Gallagher-2} would require
in the conditional analogue of
 \eqref{E1-estim}, for example.
 In conclusion, in the conditional case  
  Lemma \ref{LP-Lemma-gen} will give us a wider
 $H$ and $(\ell_1,\ell_2)$ ranges, while,
 unconditionally,  Lemma \ref{LP-Lemma-gen}
 and Lemma \ref{App-BCP-Gallagher-2}
 are essentially equivalent.
 \end{remark}

\begin{lemma}
\label{partial-int-average}
Let   $\ell \ge 1$ be an integer, $N$ be a
sufficiently large integer and $L= \log N$.
Assume RH. We have 
\[
\int_{-\frac12}^{\frac12} \,
\Bigl\vert
\Stilde_\ell(\alpha) - \frac{\Gamma(1/\ell)}{\ell z^{\frac{1}{\ell}}}
\Bigr\vert^{2}
\vert U(-\alpha,H) \vert
\ \dx \alpha 
\ll_{\ell}
N^{\frac{1}{\ell}}  L^{3}.
\] 
\end{lemma}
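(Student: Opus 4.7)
The plan is a standard dyadic decomposition of the integration range according to the size of $|U(-\alpha, H)|$, combined with the conditional mean square estimate in Lemma \ref{LP-Lemma-gen}. Since $|U(-\alpha,H)|=|U(\alpha,H)|$, the bound \eqref{UH-estim} applies to $U(-\alpha,H)$ directly.

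Concretely, I would partition $[-\tfrac12,\tfrac12]$ into the central piece $D_0 = \{|\alpha|\le 1/H\}$ and the shells $D_k = \{2^{k-1}/H \le |\alpha| \le \min(2^k/H,\tfrac12)\}$ for $k\ge 1$, stopping at the smallest $K$ with $2^{K-1}/H \ge 1/2$; since $H\le N$, this gives $K\ll L$. By \eqref{UH-estim} we have $|U(-\alpha,H)|\le H$ on $D_0$ and $|U(-\alpha,H)|\le 2H/2^k$ on $D_k$ for $k\ge 1$.

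On $D_0$, Lemma \ref{LP-Lemma-gen} (the RH version) with $\xi=1/H$ yields
\[
\int_{D_0} \Bigl|\Stilde_\ell(\alpha) - \frac{\Gamma(1/\ell)}{\ell z^{1/\ell}}\Bigr|^{2} |U(-\alpha,H)|\, \dx\alpha \ll_\ell H\cdot N^{1/\ell}\cdot \frac{1}{H}\cdot L^{2}= N^{1/\ell}L^{2}.
\]
On each $D_k$ with $k\ge 1$, I would majorise the domain by $|\alpha|\le \min(2^k/H,\tfrac12)$ and apply the same lemma with that value of $\xi$, obtaining uniformly in $k$
\[
\int_{D_k} \Bigl|\Stilde_\ell(\alpha) - \frac{\Gamma(1/\ell)}{\ell z^{1/\ell}}\Bigr|^{2} |U(-\alpha,H)|\, \dx\alpha \ll_\ell \frac{H}{2^{k-1}}\cdot N^{1/\ell}\cdot \frac{2^k}{H}\cdot L^{2} \ll N^{1/\ell}L^{2}.
\]
Summing over the $K\ll L$ shells produces the asserted bound $N^{1/\ell}L^{3}$.

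No step should be a serious obstacle. The key structural point to notice is the exact cancellation between the pointwise decay $|U(-\alpha,H)|\ll 2^{-k}H$ on $D_k$ and the linear $\xi$-dependence of the mean square bound in Lemma \ref{LP-Lemma-gen}: their product is independent of $k$, so the entire cost beyond a single shell comes from the logarithmic count of shells, yielding the extra factor of $L$ over the $N^{1/\ell}L^{2}$ one would get from integrating over $[-1/H,1/H]$ alone.
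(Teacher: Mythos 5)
Your proposal is correct. It rests on the same key input as the paper --- the RH version of Lemma~\ref{LP-Lemma-gen}, whose bound is linear in $\xi$ --- but handles the tail $|\alpha|>1/H$ differently: you use a dyadic decomposition into $O(L)$ shells, with the pointwise bound $|U(-\alpha,H)|\ll 2^{-k}H$ on the $k$-th shell exactly cancelling the $\xi\asymp 2^k/H$ factor from the lemma, giving a uniform $N^{1/\ell}L^2$ per shell and hence $N^{1/\ell}L^3$ after summing. The paper instead splits $[-1/2,1/2]$ into the central interval $[-1/H,1/H]$ (treated as you treat $D_0$) and the two flanks $\pm[1/H,1/2]$, and on the flanks replaces $|U(-\alpha,H)|$ by $1/|\alpha|$ and performs a partial integration against $\int_{-\xi}^{\xi}|\Etilde_\ell|^2\,\dx\alpha$, landing on $\int_{1/H}^{1/2} N^{1/\ell}\xi L^2\,\dx\xi/\xi^2\ll N^{1/\ell}L^3$. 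These are two standard, essentially interchangeable devices for exploiting exactly the balance you identify (decay of $|U|$ against the $\xi$-linear mean-square bound); your dyadic version avoids the explicit integration by parts at the cost of the shell bookkeeping, and both deliver the same extra factor of $L$ from the logarithmic number of scales.
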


\begin{proof}
Let
$\Etilde_{\ell}(\alpha) : =\Stilde_\ell(\alpha) -  \Gamma(1/\ell)/(\ell z^{\frac{1}{\ell}})$.
By \eqref{UH-estim} we have 
\begin{align}
\notag
\int_{-\frac12}^{\frac12} \,
& \vert
\Etilde_{\ell}(\alpha)
 \vert^{2}
\vert U(-\alpha,H) \vert
\ \dx \alpha 
\\ \notag
&\ll
H
\int_{-\frac1H}^{\frac1H} \,
 \vert
\Etilde_{\ell}(\alpha)
 \vert^{2}
\ \dx \alpha
+
\int_{\frac1H}^{\frac12} \,
 \vert
\Etilde_{\ell}(\alpha)
 \vert^{2}
\frac{\dx \alpha}{\alpha}
+
\int_{-\frac12}^{-\frac1H} \,
 \vert
\Etilde_{\ell}(\alpha)
 \vert^{2}
\frac{\dx \alpha}{\alpha}
\\
\label{M-split}
&
=
M_1+M_2+M_3,
\end{align}
say. By Lemma \ref{LP-Lemma-gen} we immediately get that
\begin{equation}
\label{M1-estim}
M_1 \ll_{\ell} N^{\frac{1}{\ell}}  L^{2}.
\end{equation}
By a partial integration and Lemma \ref{LP-Lemma-gen} we obtain
\begin{align}
\notag
M_2 
&\ll 
\int_{-\frac12}^{\frac12} \,
 \vert
\Etilde_{\ell}(\alpha)
 \vert^{2}
\ \dx \alpha
+
H \int_{-\frac1H}^{\frac1H} \,
 \vert
\Etilde_{\ell}(\alpha)
 \vert^{2}
\ \dx \alpha 
+
\int_{\frac1H}^{\frac12}
\Bigl(
\int_{-\xi}^{\xi}
 \vert
\Etilde_{\ell}(\alpha)
 \vert^{2}
\ \dx \alpha
\Bigr)
\frac{\ \dx \xi}{\xi^2}
\\&
\label{M2-estim}
\ll_{\ell} 
N^{\frac{1}{\ell}}  L^{2}
+
\int_{\frac1H}^{\frac12} \frac{N^{\frac{1}{\ell}} \xi  L^{2}}{\xi^2}\ \dx \xi
\ll_{\ell} 
N^{\frac{1}{\ell}}  L^{3}.
\end{align}
A similar computation leads to $M_3 \ll_{\ell}  N^{\frac{1}{\ell}}  L^{3}$.
By \eqref{M-split}-\eqref{M2-estim}, the lemma follows.
\end{proof}

 \section{Proof of Theorem \ref{thm-uncond}}
By now we let $2\le \ell_1\le \ell_2$; we'll see at the end
of the proof how the conditions in the statement of this theorem
follow.
Assume $H>2B$. 
We have 
\begin{align}
\notag 
\sum_{n=N+1}^{N+H} &
r^{\second}_{\ell_1,\ell_2}(n) = 
\int_{-\frac12}^{\frac12} V_{\ell_1}(\alpha)  V_{\ell_2}(\alpha)  U(-\alpha,H)e(-N\alpha) \, \dx \alpha
\\
\notag
&=
\int_{-\frac{B}{H}}^{\frac{B}{H}} V_{\ell_1}(\alpha) V_{\ell_2}(\alpha)  U(-\alpha,H)e(-N\alpha) \, \dx \alpha 
 \\
 \label{dissect}
& \hskip1cm 
+ 
\int\limits_{I(B,H)} \!\!\!\!\!  
V_{\ell_1}(\alpha) V_{\ell_2}(\alpha)  U(-\alpha,H)e(-N\alpha) \, \dx \alpha,
%
\end{align}
where  $I(B,H):=[-1/2,-B/H]\cup  [B/H, 1/2]$.
By the Cauchy-Schwarz inequality we have
\begin{align*}
\int\limits_{I(B,H)} \!\!\! 
V_{\ell_1}(\alpha) V_{\ell_2}(\alpha)  U(-\alpha,H)e(-N\alpha) \, \dx \alpha
&
 \ll
\Bigl(
\int\limits_{I(B,H)} \!\!\! 
\vert V_{\ell_1}(\alpha) \vert ^{2} \vert U(-\alpha,H) \vert \, \dx \alpha
\Bigr)^{\frac12}
\\&
\times
\Bigl(
\int\limits_{I(B,H)} \!\!\! 
\vert V_{\ell_2}(\alpha) \vert ^{2} \vert U(-\alpha,H) \vert \, \dx \alpha
\Bigr)^{\frac12}.
\end{align*}

By \eqref{UH-estim}, Lemma \ref{zac-lemma}  
and a partial integration argument, it is clear that 
\begin{align}
\notag
\int\limits_{I(B,H)} \!\!\! 
\vert V_{\ell}(\alpha) \vert ^{2} &\vert U(-\alpha,H) \vert \, \dx \alpha
\ll
\int_{\frac{B}{H}}^{\frac12} 
\vert V_{\ell}(\alpha) \vert ^{2} \frac{\dx \alpha}{\alpha}
 \\ 
 \label{V-average}
 &
 \ll_\ell
 N^{\frac{1}{\ell}} L 
+
\frac{H L^{2}}{B} 
+
\int_{\frac{B}{H}}^{\frac12}  
(\xi N^{\frac{1}{\ell}}L  + L^{2})\ \frac{\dx \xi}{\xi^2}
 \ll_\ell
  N^{\frac{1}{\ell}} L^{2}
+
\frac{H L^{2}}{B},
\end{align}
for every $\ell\ge 2$.
Hence, recalling \eqref{B-def},
we obtain
\begin{align}
\notag
 \int\limits_{I(B,H)} \!\!\!\!
V_{\ell_1}(\alpha) V_{\ell_2}(\alpha)  U(-\alpha,H)e(-N\alpha) \, \dx \alpha
& \ll_{\ell_1,\ell_2}
  N^{\frac{\lambda}{2}} L^{2}
+
 \frac{ H^{\frac12} N^{\frac{1}{2\ell_1}}  L^{2}}{B^{\frac12}}
 +\frac{H L^{2}}{B}
 \\&
  \label{V-average-coda}
 \ll_{\ell_1,\ell_2}
\frac{H L^{2}}{B}.
\end{align}

By \eqref{dissect} and \eqref{V-average-coda} we get
\begin{align} 
\notag
\sum_{n=N+1}^{N+H} 
&r^{\second}_{\ell_1,\ell_2}(n) 
 =  
\int_{-\frac{B}{H}}^{\frac{B}{H}} V_{\ell_1}(\alpha) V_{\ell_2}(\alpha)  U(-\alpha,H)e(-N\alpha) \, \dx \alpha 
 +
\Odipg{\ell_1,\ell_2}{\frac{H L^{2}}{B}}
%
\\
\notag
&
 = \int_{-\frac{B}{H}}^{\frac{B}{H}} f_{\ell_1}(\alpha) f_{\ell_2}(\alpha)  U(-\alpha,H)e(-N\alpha) \, \dx \alpha 
 \\
 \notag
 &
\hskip0.5cm
+
\int_{-\frac{B}{H}}^{\frac{B}{H}} f_{\ell_2}(\alpha) (V_{\ell_1}(\alpha) - f_{\ell_1}(\alpha) ) U(-\alpha,H)e(-N\alpha) \, \dx \alpha 
\\
 \notag
 &
\hskip0.5cm
+
\int_{-\frac{B}{H}}^{\frac{B}{H}} f_{\ell_1}(\alpha) (V_{\ell_2}(\alpha) - f_{\ell_2}(\alpha) ) U(-\alpha,H)e(-N\alpha) \, \dx \alpha 
 \\
 \notag
 &
\hskip0.5cm
 +
\int_{-\frac{B}{H}}^{\frac{B}{H}} (V_{\ell_1}(\alpha)-f_{\ell_1}(\alpha) ) (V_{\ell_2}(\alpha) - f_{\ell_2}(\alpha) ) U(-\alpha,H)e(-N\alpha) \, \dx \alpha
 \\
\notag
 &
\hskip0.5cm+
\Odipg{\ell_1,\ell_2}{\frac{H L^{2}}{B}}
\\
 \label{main-dissection}
&= \I_1 +\I_2 + \I_3 + \I_4 + E,
 \end{align}
 say. We now evaluate these terms.
 
\subsection{Computation of the main term $\I_1$}
 \label{comp-main-term}
Recalling Definition \eqref{density-def-c-def} and that
$I(B,H)=[-1/2,-B/H]\cup  [B/H, 1/2]$, 
a direct calculation 
and \eqref{f-ell-T-ell-estim} give
\begin{align}
\notag
\I_1
&
=
\sum_{n=1}^H 
\int_{-\frac12}^{\frac12} f_{\ell_1}(\alpha) f_{\ell_2}(\alpha)   e(-(n+N)\alpha)\, \dx \alpha 
+\Odipg{\ell_1,\ell_2}{
\int\limits_{I(B,H)} \!\!\!  \frac{ \dx \alpha}{ \vert \alpha \vert^{1+\lambda}}} 
\\
\notag
&
=
\frac{1}{\ell_1\ell_2}
\sum_{n=1}^H \sum_{\substack {m_{1}+m_{2} =n+N\\ N/A \le  m_{1} \le  N \\ N/A \le  m_{2}\le  N}}
m_1^{\frac{1}{\ell_1}-1}m_2^{\frac{1}{\ell_2}-1}
+\Odipg{\ell_1,\ell_2}{  \Bigl(\frac{H}{B}\Bigr)^{\lambda}}
 \\
\label{I1-eval}
 & 
=
M_{\ell_1,\ell_2}(H,N)  +\Odipg{\ell_1,\ell_2}{  \Bigl(\frac{H}{B}\Bigr)^{\lambda}},
 \end{align}
 say.
Recalling Lemma 2.8
of Vaughan \cite{Vaughan1997} we can see that
 order of magnitude of the main term $M_{\ell_1,\ell_2}(H,N)$ is 
$ HN^{\lambda-1}$. 
We first complete the range of summation for $m_1$ and $m_2$ to the
interval $[1, N]$.
The corresponding error term is
\begin{align*}
  &\ll_{\ell_1, \ell_2} \!\!
  \sum_{n=1}^H
    \sum_{\substack {m_{1}+m_{2} =n+N\\ 1 \le  m_{1} \le  N/A \\ 1 \le  m_{2}\le  N}}
      m_1^{\frac{1}{\ell_1}-1}m_2^{\frac{1}{\ell_2}-1}
  \ll_{\ell_1, \ell_2} \!\!
  \sum_{n=1}^H \sum_{m=1}^{N/A} m^{\frac{1}{\ell_2}-1} (n+N-m)^{\frac{1}{\ell_1}-1} \\
  &\ll_{\ell_1, \ell_2}
  H N^{\frac{1}{\ell_1}-1} \sum_{m=1}^{N/A} m^{\frac{1}{\ell_2}-1}
  \ll_{\ell_1, \ell_2} \!\!
  H N^{\lambda-1} A^{- \frac{1}{\ell_2}}.
\end{align*}
We deal with the main term $M_{\ell_1,\ell_2}(H,N)$ using Lemma~2.8 of
Vaughan \cite{Vaughan1997}, which yields the $\Gamma$ factors
hidden in $c(\ell_1,\ell_2)$:
\begin{align*}
  \frac{1}{\ell_1\ell_2}
  &
  \sum_{n=1}^H \sum_{\substack {m_{1}+m_{2} =n+N\\ 1\le  m_{1} \le  N \\ 1 \le  m_{2}\le  N}}
    m_1^{\frac{1}{\ell_1}-1}m_2^{\frac{1}{\ell_2}-1}
  =
  \frac{1}{\ell_1\ell_2}
  \sum_{n=1}^H \sum_{m=1}^{N} m^{\frac{1}{\ell_2}-1}(n+N-m)^{\frac{1}{\ell_1}-1} \\
  &=
  c(\ell_1,\ell_2)
  \sum_{n=1}^H
   \Bigl[ (n+N)^{\lambda-1} +
     \Odi{(n+N)^{\frac{1}{\ell_1}-1} + N^{\frac{1}{\ell_2}-1}n^{\frac{1}{\ell_1}}}
  \Bigr] \\
  &= 
  c(\ell_1,\ell_2)
  \sum_{n=1}^H (n+N)^{\lambda-1} 
  +
  \Odipg{\ell_1,\ell_2}{H N^{\frac{1}{\ell_1}-1} +H^{\frac{1}{\ell_1}+1} N^{\frac{1}{\ell_2}-1}} \\
  &= 
  c(\ell_1,\ell_2) HN^{\lambda-1}
  +\Odipg{\ell_1,\ell_2}{ H^2N^{\lambda-2} + H   N^{\frac{1}{\ell_1}-1} +H^{\frac{1}{\ell_1}+1}   N^{\frac{1}{\ell_2}-1}}.
\end{align*}
Summing up,
\begin{align}
\notag
  M_{\ell_1,\ell_2}(H,N) 
  &=
  c(\ell_1,\ell_2) HN^{\lambda-1}
  \\&
\label{main-term}
  +\Odipg{\ell_1,\ell_2}{ H^2N^{\lambda-2} + H   N^{\frac{1}{\ell_1}-1} +H^{\frac{1}{\ell_1}+1}   N^{\frac{1}{\ell_2}-1} 
  +\frac{HN^{\lambda-1}}{ A^{\frac{1}{\ell_2}}}}.
\end{align}

\subsection{Estimate of $\I_2$}
\label{estim-I2}
Using \eqref{f-ell-T-ell-estim} we obtain
\begin{align}
\notag
\vert V_{\ell}(\alpha) - f_{\ell}(\alpha) \vert
&\le
\vert V_{\ell}(\alpha) - T_{\ell}(\alpha) \vert
+
\vert T_{\ell}(\alpha) - f_{\ell}(\alpha) \vert
\\&
\label{V-approx}
=
\vert V_{\ell}(\alpha) - T_{\ell}(\alpha) \vert
+
\Odim{(1+\vert \alpha \vert N)^{\frac12}}.
\end{align}
Hence
 \begin{align}
\notag
\I_2
& \ll
  \int_{-\frac{B}{H}}^{\frac{B}{H}} 
\vert f_{\ell_2}(\alpha) \vert \vert V_{\ell_1}(\alpha) -T_{\ell_1}(\alpha)  \vert\vert U(-\alpha,H)\vert \, \dx \alpha
\\&
\notag
\hskip1cm
 +
   \int_{-\frac{B}{H}}^{\frac{B}{H}} 
\vert  f_{\ell_2}(\alpha) \vert  (1+\vert \alpha \vert N)^{\frac12} \vert U(-\alpha,H) \vert \, \dx \alpha
\\
\label{I2-split}
&= E_1+E_2,
 \end{align}
 say.
By \eqref{UH-estim} 
we have
\begin{align*}
E_2 
&\ll
 H  \int_{-\frac{1}{N}}^{\frac{1}{N}} \vert f_{\ell_2}(\alpha)\vert \, \dx \alpha
+
H N^{\frac12} \int_{\frac{1}{N}}^{\frac{1}{H}} \vert f_{\ell_2}(\alpha)\vert \alpha^{\frac12}  \, \dx \alpha
\\& 
\hskip1cm
+
N^{\frac12}\int_{\frac{1}{H}}^{\frac{B}{H}} \vert f_{\ell_2}(\alpha)\vert\alpha^{-\frac12}  \, \dx \alpha.
\end{align*}

Hence, using the  Cauchy-Schwarz inequality and Lemma \ref{media-f-ell},
we get
\begin{align}
\notag
E_2 
&
 \ll_{ \ell_2}
 H N^{-\frac12}  \Bigl(\int_{-\frac{1}{N}}^{\frac{1}{N}} \vert f_{\ell_2}(\alpha)\vert^2\ \dx \alpha\Bigr)^{\frac{1}{2}} 
 \\&
\notag
\hskip1cm 
+
H N^{\frac12} \Bigl(\int_{\frac{1}{N}}^{\frac{1}{H}} \vert f_{\ell_2}(\alpha)\vert^2\ \dx \alpha\Bigr)^{\frac{1}{2}} 
 \Bigl(\int_{\frac{1}{N}}^{\frac{1}{H}}  \alpha   \, \dx \alpha\Bigr)^{\frac{1}{2}} 
 \\
 \notag
 &
 \hskip1cm
+
N^{\frac12}
\Bigl(\int_{\frac{1}{H}}^{\frac{B}{H}} \vert f_{\ell_2}(\alpha)\vert^2\ \dx \alpha\Bigr)^{\frac{1}{2}} 
 \Bigl(\int_{\frac{1}{H}}^{\frac{B}{H}}  \frac{\dx \alpha}{\alpha}\Bigr)^{\frac{1}{2}}  
\\ 
\notag
&
 \ll_{ \ell_2}
 \bigl(
 H  N^{\frac{1}{\ell_2}-1}  
+
  N^{\frac{1}{\ell_2}}  
+
  N^{\frac{1}{\ell_2}} L^{\frac{1}{2}}
  \bigr)  A^{\frac{1}{2}-\frac{1}{\ell_2}}  (\log A)^{1/2}
\\ 
 \label{E2-estim}
&
 \ll_{ \ell_2}
 N^{\frac{1}{\ell_2}}  A^{\frac{1}{2}-\frac{1}{\ell_2}} L^{\frac{1}{2}} (\log A)^{1/2},
\end{align}
where $A$ is defined in \eqref{A-def}.

Using  \eqref{UH-estim}, the Cauchy-Schwarz inequality,   
and Lemmas  \ref{App-BCP-Gallagher-2} and \ref{media-f-ell}
we obtain
\begin{align}
\notag
E_1&\ll 
 H 
 \Bigl(\int_{-\frac{B}{H}}^{\frac{B}{H}} \vert f_{\ell_2}(\alpha)\vert^2\ \dx \alpha\Bigr)^{\frac{1}{2}} 
  \Bigl(\int_{-\frac{B}{H}}^{\frac{B}{H}} \vert V_{\ell_1}(\alpha)-T_{\ell_1}(\alpha)\vert^2\ \dx \alpha\Bigr)^{\frac{1}{2}}  
\\ 
\label{E1-estim}  
&
\ll_{\ell_1,\ell_2}
 H \Bigl( \frac{N}{A}\Bigr)^{\frac{1}{\ell_2}-\frac{1}{2}} 
( \log A )^{1/2}
 N^{\frac{1}{\ell_1}-\frac{1}{2}}  
 A(N, -c_1)
 \ll_{\ell_1,\ell_2}
 H N^{\lambda-1}  
 A(N, -C)
\end{align}
for a suitable choice of   $C=C(\eps)>0$, 
provided that $N^{-1+\frac{\eps}{2}}<B/H<N^{-1+\frac{5}{6\ell_1}-\eps}$;
hence   $N^{1-\frac{5}{6\ell_1}+\eps}B\le H \le N^{1-\eps}$
suffices.
Summarizing,  by \eqref{density-def-c-def}, \eqref{I2-split}-\eqref{E1-estim} we obtain
that there exists $C=C(\eps)>0$ such that 
 \begin{equation}
\label{I2-estim}
\I_2 \ll_{\ell_1,\ell_2}
 H N^{\lambda-1} A(N, -C)
\end{equation}
provided that $N^{-1+\frac{\eps}{2}}<B/H<N^{-1+\frac{5}{6\ell_1}-\eps}$; 
hence   $N^{1-\frac{5}{6\ell_1}+\eps} B \le H \le N^{1-\eps}$
suffices.

\subsection{Estimate of $\I_3$}
\label{estim-I3}
It's very similar to $\I_2$'s;
we just need to interchange $\ell_1$ with $\ell_2$
thus getting that there exists $C=C(\eps)>0$ such that 
 \begin{equation}
\label{I3-estim}
\I_3 \ll_{\ell_1,\ell_2}
 H N^{\lambda-1} A(N, -C)
\end{equation}
provided that $N^{-1+\frac{\eps}{2}}<B/H<N^{-1+\frac{5}{6\ell_2}-\eps}$; 
hence   $N^{1-\frac{5}{6\ell_2}+\eps} B \le H \le N^{1-\eps}$
suffices.

\subsection{Estimate of $\I_4$}
\label{estim-I4}
By \eqref{main-dissection} and  \eqref{V-approx} we can write
 \begin{align}
 \notag
\I_4
& \ll_{\ell_1,\ell_2}
\int_{-\frac{B}{H}}^{\frac{B}{H}}
\vert V_{\ell_1}(\alpha) -T_{\ell_1}(\alpha)  \vert \vert V_{\ell_2}(\alpha) -T_{\ell_2}(\alpha)  \vert\vert U(-\alpha,H)\vert \, \dx \alpha
\\
\notag
&\hskip1cm
+
\int_{-\frac{B}{H}}^{\frac{B}{H}}
\vert  V_{\ell_1}(\alpha) -T_{\ell_1}(\alpha)\vert  (1+\vert \alpha \vert N)^{\frac12} \vert U(-\alpha,H) \vert \, \dx \alpha
\\
\notag
&\hskip1cm
+
\int_{-\frac{B}{H}}^{\frac{B}{H}}
\vert  V_{\ell_2}(\alpha) -T_{\ell_2}(\alpha)\vert  (1+\vert \alpha \vert N)^{\frac12} \vert U(-\alpha,H) \vert \, \dx \alpha
\\
\notag
&\hskip1cm
+
\int_{-\frac{B}{H}}^{\frac{B}{H}}
 (1+\vert \alpha \vert N)  \vert U(-\alpha,H) \vert \, \dx \alpha
 \\&
 \label{I4-split}
= E_3+E_4+E_5+E_6,
 \end{align}
 say.
By \eqref{density-def-c-def}, \eqref{UH-estim}, the Cauchy-Schwarz inequality and Lemma \ref{App-BCP-Gallagher-2} 
we have
\begin{align}
\notag
E_3&\ll
H
 \Bigl(
      \int_{-\frac{B}{H}}^{\frac{B}{H}}
      \vert V_{\ell_1}(\alpha) -T_{\ell_1}(\alpha)   \vert^{2} 
        \, \dx \alpha        
\Bigr)^{\frac12}
 \Bigl(
      \int_{-\frac{B}{H}}^{\frac{B}{H}}
      \vert V_{\ell_2}(\alpha) -T_{\ell_2}(\alpha)   \vert^{2} 
        \, \dx \alpha        
\Bigr)^{\frac12}  
 \\
 &
 \ll_{\ell_1,\ell_2}
 \label{E3-estim}
 H N^{\lambda-1} A(N, -C),
\end{align}
for a suitable choice of   $C=C(\eps)>0$, 
provided that $N^{-1+\frac{\eps}{2}}<B/H<N^{-1+\frac{5}{6\ell_2}-\eps}$; 
hence   $N^{1-\frac{5}{6\ell_2}+\eps} B \le H \le N^{1-\eps}$
suffices.

By \eqref{UH-estim} and the Cauchy-Schwarz inequality 
we have
\begin{align}
\notag
E_4&\ll
 H  \int_{-\frac{1}{N}}^{\frac{1}{N}} \vert V_{\ell_1}(\alpha) - T_{\ell_1}(\alpha)\vert \, \dx \alpha
+
H N^{\frac12} \int_{\frac{1}{N}}^{\frac{1}{H}} \vert V_{\ell_1}(\alpha) - T_{\ell_1}(\alpha)\vert \alpha^{\frac12}  \, \dx \alpha
\\
\notag
&
\hskip1cm
+
N^{\frac12}\int_{\frac{1}{H}}^{\frac{B}{H}} \vert V_{\ell_1}(\alpha) - T_{\ell_1}(\alpha)\vert\alpha^{-\frac12}  \, \dx \alpha.  
\end{align}
By  Lemma \ref{App-BCP-Gallagher-2} we obtain
\begin{align}
\notag
E_4&\ll
 HN^{-\frac12}  \Bigl(\int_{-\frac{1}{N}}^{\frac{1}{N}} \vert V_{\ell_1}(\alpha) - T_{\ell_1}(\alpha)\vert^2 \, \dx \alpha\Bigr)^{\frac{1}{2}}
 \\
\notag
& \hskip1cm
+
H N^{\frac12} 
\Bigl( \int_{\frac{1}{N}}^{\frac{1}{H}} \vert V_{\ell_1}(\alpha) - T_{\ell_1}(\alpha)\vert^2   \, \dx \alpha
\Bigr)^{\frac{1}{2}}
\Bigl( \int_{\frac{1}{N}}^{\frac{1}{H}} 
 \alpha \, \dx \alpha
\Bigr)^{\frac{1}{2}}
\\
\notag
&
\hskip1cm
+
N^{\frac12}\Bigl( \int_{\frac{1}{H}}^{\frac{B}{H}} \vert V_{\ell_1}(\alpha) - T_{\ell_1}(\alpha)\vert^2   \, \dx \alpha
\Bigr)^{\frac{1}{2}}
\Bigl( \int_{\frac{1}{H}}^{\frac{B}{H}} 
 \frac{\dx \alpha}{\alpha}
\Bigr)^{\frac{1}{2}}
 \\
 &
 \ll_{\ell_1,\ell_2}
 \label{E4-estim}
 N^{  \frac{1}{\ell_1} } A(N, -C),
\end{align}
for a suitable choice of $C=C(\eps)>0$, 
provided that $N^{-1+\frac{\eps}{2}}<B/H<N^{-1+\frac{5}{6\ell_1}-\eps}$; 
hence   $N^{1-\frac{5}{6\ell_1}+\eps} B \le H \le N^{1-\eps}$
suffices.

The estimate of $E_5$ runs analogously to the one of $E_4$.
We obtain
\begin{equation}
 \label{E5-estim}
E_5 
 \ll_{\ell_1,\ell_2}
 N^{  \frac{1}{\ell_2} } A(N, -C),
\end{equation}
for a suitable choice of $C=C(\eps)>0$, 
provided that $N^{-1+\frac{\eps}{2}}<B/H<N^{-1+\frac{5}{6\ell_2}-\eps}$; 
hence   $N^{1-\frac{5}{6\ell_2}+\eps} B \le H \le N^{1-\eps}$
suffices.
Moreover  by  \eqref{UH-estim}
we get
\begin{align}
  \label{E6-estim}
E_6
 \ll 
H  \int_{-\frac{1}{N}}^{\frac{1}{N}}   \, \dx \alpha
+
H N  \int_{\frac{1}{N}}^{\frac{1}{H}}   \alpha   \, \dx \alpha 
+
N \int_{\frac{1}{H}}^{\frac{B}{H}}    \, \dx \alpha  
\ll
\frac{NB}{H}.
\end{align}
Hence by \eqref{density-def-c-def} and \eqref{I4-split}-\eqref{E6-estim} we obtain for $\ell_1\ge 2$ that
 \begin{equation}
\label{I4-estim}
\I_4 \ll_{\ell_1,\ell_2} 
 H N^{\lambda-1} A(N, -C),
\end{equation}
for a suitable choice of $C=C(\eps)>0$, 
provided that $N^{-1+\frac{\eps}{2}}<B/H<N^{-1+\frac{5}{6\ell_2}-\eps}$; 
hence   $N^{1-\frac{5}{6\ell_2}+\eps} B\le H \le N^{1-\eps}$
suffices.

\subsection{Final words}

Summarizing, recalling that $2\le\ell_1\le \ell_2$, 
by \eqref{B-def}, \eqref{main-dissection}-\eqref{main-term}, 
\eqref{I2-estim}-\eqref{I3-estim} and \eqref{I4-estim},
we have that there exists $C=C(\eps)>0$ such that 
\begin{align*}
\sum_{n=N+1}^{N+H} &
r^{\second}_{\ell_1,\ell_2}(n)
=
c(\ell_1,\ell_2)   H N^{\lambda-1} 
+
\Odipg{\ell_1,\ell_2}{H N^{\lambda-1} A(N, -C)},
\end{align*}
uniformly for $N^{2-\frac{11}{6\ell_2}-\frac{1}{\ell_1}+\eps}\le H \le N^{1-\eps}$ 
which is  non-trivial only  for  $\ell_1=2, \ell_2\in\{2,3\}$.
Theorem \ref{thm-uncond} follows.

\section{Proof of Theorem \ref{thm-RH}}
{}From now on we assume the Riemann Hypothesis holds. Recalling \eqref{r-def},
we have  
\[
\sum_{n=N+1}^{N+H} e^{-n/N} 
R^{\second}_{\ell_1,\ell_2}(n)
=  
\int_{-\frac{1}{2}}^{\frac{1}{2}} \Vtilde_{\ell_1}(\alpha) \Vtilde_{\ell_2}(\alpha)  U(-\alpha,H)e(-N\alpha) \, \dx \alpha .
\]
Hence
\begin{align}
 \notag
&\sum_{n=N+1}^{N+H} e^{-n/N} 
R^{\second}_{\ell_1,\ell_2}(n)
 = \frac{\Gamma(1/\ell_1)\Gamma(1/\ell_2)}{\ell_1\ell_2}
 \int_{-\frac{1}{2}}^{\frac{1}{2}} z^{-\frac{1}{\ell_1}-\frac{1}{\ell_2}}  U(-\alpha,H)e(-N\alpha) \, \dx \alpha 
\\
 \notag
 &
\hskip0.5cm
+
\frac{\Gamma(1/\ell_1)}{\ell_1}
\int_{-\frac{1}{2}}^{\frac{1}{2}} z^{-\frac{1}{\ell_1}} \Bigl(\Vtilde_{\ell_2}(\alpha)- \frac{\Gamma(1/\ell_2)}{\ell_2 z^{\frac{1}{\ell_2}}} \Bigr) 
U(-\alpha,H)e(-N\alpha) \, \dx \alpha 
\\
 \notag
 &
\hskip0.5cm
+
\frac{\Gamma(1/\ell_2)}{\ell_2}
\int_{-\frac{1}{2}}^{\frac{1}{2}}  z^{-\frac{1}{\ell_2}} \Bigl(\Vtilde_{\ell_1}(\alpha)- \frac{\Gamma(1/\ell_1)}{\ell_1 z^{\frac{1}{\ell_1}}}\Bigr)  U(-\alpha,H)e(-N\alpha) \, \dx \alpha 
\\
 \notag
 &
\hskip0.5cm
+
\int_{-\frac{1}{2}}^{\frac{1}{2}}  \Bigl(\Vtilde_{\ell_1}(\alpha)- \frac{\Gamma(1/\ell_1)}{\ell_1 z^{\frac{1}{\ell_1}}}\Bigr) 
\Bigl(\Vtilde_{\ell_2}(\alpha)- \frac{\Gamma(1/\ell_2)}{\ell_2 z^{\frac{1}{\ell_2}}} \Bigr) 
 U(-\alpha,H)e(-N\alpha) \, \dx \alpha 
\\
 \label{main-dissection-RH-series}
 & 
 \hskip1cm
= \J_1 +\J_2 + \J_3 + \J_4,
 \end{align}
 say.
 Now we evaluate these terms.
 
\subsection{Computation of $\J_1$}

By Lemma \ref{Laplace-formula},
 \eqref{density-def-c-def}
and using  $e^{-n/N}=e^{-1}+ \Odi{H/N}$ for $n\in[N+1,N+H]$, $1\le H \le N$,
a direct calculation  gives
\begin{align}
\notag
\J_1
&
=
c(\ell_1,\ell_2)
\sum_{n=N+1}^{N+H} e^{-n/N} n^{\lambda-1}
+\Odipg{\ell_1,\ell_2}{\frac{H}{N}}
\\
\notag
&
=
\frac{c(\ell_1,\ell_2)}{e}
\sum_{n=N+1}^{N+H} n^{\lambda-1}
+\Odipg{\ell_1,\ell_2}{\frac{H}{N}+H^2N^{\lambda-2}}
\\
\label{J1-eval-RH-series}
&
=
c(\ell_1,\ell_2)
\frac{HN^{\lambda-1}}{e}
+\Odipg{\ell_1,\ell_2}{\frac{H}{N}+H^2N^{\lambda-2}+N^{\lambda-1}}.
 \end{align}

\subsection{Estimate of $\J_2$} %
{}From now on, we denote 
\begin{equation}
\label{Etilde-def}
\Etilde_{\ell}(\alpha) : =\Stilde_\ell(\alpha) - \frac{\Gamma(1/\ell)}{\ell z^{\frac{1}{\ell}}}.
\end{equation}
Using Lemma \ref{tilde-trivial-lemma}
we remark that
\begin{equation}
\label{V-tilde-approx}
\Bigl\vert
\Vtilde_{\ell}(\alpha)- \frac{\Gamma(1/\ell)}{\ell z^{\frac{1}{\ell}}}
\Bigr\vert
\le
\vert
\Etilde_{\ell}(\alpha)
\vert
+
\vert
\Vtilde_{\ell}(\alpha)-\Stilde_{\ell}(\alpha)
\vert
=
\vert
\Etilde_{\ell}(\alpha)
\vert
+\Odipm{\ell}{N^{\frac{1}{2\ell}}}.
\end{equation}

Hence
\begin{align}
\notag
\J_2
&\ll_{\ell_1,\ell_2}
\int_{-\frac{1}{2}}^{\frac{1}{2}} 
\vert z\vert^{-\frac{1}{\ell_1}}
\vert \Etilde_{\ell_2}(\alpha) \vert
\vert 
U(-\alpha,H)
\vert \, \dx \alpha 
\\&
\label{J2-RH-series-first-estim}
\hskip1cm
+
N^{\frac{1}{2\ell_2}}
\int_{-\frac{1}{2}}^{\frac{1}{2}} 
\vert z\vert^{-\frac{1}{\ell_1}}
\vert 
U(-\alpha,H)
\vert \, \dx \alpha 
=\A+\B,
\end{align}
say.
By \eqref{UH-estim} and \eqref{z-estim}
we have
\begin{align}
\notag
\B &\ll_{\ell_1,\ell_2}
H N^{\frac{1}{\ell_1}+\frac{1}{2\ell_2}-1} 
+
HN^{\frac{1}{2\ell_2}}
\int_{\frac{1}{N}}^{\frac{1}{H}} 
\alpha^{-\frac{1}{\ell_1}}
\, \dx \alpha 
+
N^{\frac{1}{2\ell_2}}
\int_{\frac{1}{H}}^{\frac{1}{2}} 
\alpha^{-\frac{1}{\ell_1}-1}
\, \dx \alpha 
\\
\label{B-estim}
&
\ll_{\ell_1,\ell_2}
H N^{\frac{1}{\ell_1}+\frac{1}{2\ell_2}-1} 
+
H^{\frac{1}{\ell_1}}N^{\frac{1}{2\ell_2}}.
\end{align}
By \eqref{UH-estim}, \eqref{z-estim},
the Cauchy-Schwarz inequality, 
Lemma \ref{LP-Lemma-gen}
and a partial integration argument
similar to the one used in the proof of Lemma \ref{partial-int-average}
(see the estimate of $M_2$ there),
we have
\begin{align}
\notag
\A &\ll_{\ell_1,\ell_2}
H N^{\frac{1}{\ell_1}}
\int_{-\frac{1}{N}}^{\frac{1}{N}} 
\vert \Etilde_{\ell_2}(\alpha) \vert
\, \dx \alpha 
+
H
\int_{\frac{1}{N}}^{\frac{1}{H}} 
\frac{\vert \Etilde_{\ell_2}(\alpha) \vert}
{\alpha^{\frac{1}{\ell_1}}}
\, \dx \alpha 
+
\int_{\frac{1}{H}}^{\frac{1}{2}} 
\frac{\vert \Etilde_{\ell_2}(\alpha) \vert}
{\alpha^{\frac{1}{\ell_1}+1}}
\, \dx \alpha 
\\
\notag
&\ll_{\ell_1,\ell_2}
H N^{\frac{1}{\ell_1}-\frac{1}{2}}
\Bigl(
\int_{-\frac{1}{N}}^{\frac{1}{N}} 
\vert \Etilde_{\ell_2}(\alpha) \vert^2
\, \dx \alpha 
\Bigr)^{\frac{1}{2}}
\\
\notag
&\hskip1cm
+
H
\Bigl(
\int_{\frac{1}{N}}^{\frac{1}{H}} 
\vert \Etilde_{\ell_2}(\alpha) \vert^2
\, \dx \alpha 
\Bigr)^{\frac{1}{2}}
\Bigl(
\int_{\frac{1}{N}}^{\frac{1}{H}} 
\frac{\dx \alpha} {\alpha^{\frac{2}{\ell_1}}}
\Bigr)^{\frac{1}{2}}
\\
\notag
&\hskip1cm+
\Bigl(
\int_{\frac{1}{H}}^{\frac{1}{2}} 
\vert \Etilde_{\ell_2}(\alpha) \vert^2
\frac{\dx \alpha} {\alpha^{2}}
\Bigr)^{\frac{1}{2}}
\Bigl(
\int_{\frac{1}{H}}^{\frac{1}{2}} 
\frac{\dx \alpha} {\alpha^{\frac{2}{\ell_1}}}
\Bigr)^{\frac{1}{2}}
\\
\label{A-estim}
&\ll_{\ell_1,\ell_2}
H N^{\frac{1}{\ell_1}+\frac{1}{2\ell_2}-1}L
+
H^{\frac{1}{\ell_1}} N^{\frac{1}{2\ell_2}}L^{\frac32}.
\end{align}
By \eqref{J2-RH-series-first-estim}-\eqref{A-estim}
we have
\begin{equation}
\J_2\ll_{\ell_1,\ell_2}
 \label{J2-estim-RH-series}  
 H N^{\frac{1}{\ell_1}+\frac{1}{2\ell_2}-1}L
+
H^{\frac{1}{\ell_1}} N^{\frac{1}{2\ell_2}}L^{\frac32}
\ll_{\ell_1,\ell_2}
H^{\frac{1}{\ell_1}} N^{\frac{1}{2\ell_2}}L^{\frac32}.
\end{equation}

\subsection{Estimate of $\J_3$}

The estimate of $\J_3$
is very similar to $\J_2$'s;
we just need to interchange $\ell_1$ with $\ell_2$.
We obtain
 \begin{equation}
\label{J3-estim-RH-series}
\J_3 \ll_{\ell_1,\ell_2}
 H N^{\frac{1}{2\ell_1}+\frac{1}{\ell_2}-1}L
+
H^{\frac{1}{\ell_2}} N^{\frac{1}{2\ell_1}}L^{\frac32}
\ll_{\ell_1,\ell_2}
H^{\frac{1}{\ell_2}} N^{\frac{1}{2\ell_1}}L^{\frac32}.
\end{equation}

\subsection{Estimate of $\J_4$}
 
 Using \eqref{density-def-c-def} and \eqref{V-tilde-approx} we get
 \begin{align}
 \notag
\I_4
& \ll_{\ell_1,\ell_2}
\int_{-\frac12}^{\frac12}
\vert \Etilde_{\ell_1}(\alpha) \vert \vert \Etilde_{\ell_2}(\alpha) \vert\vert U(-\alpha,H)\vert \, \dx \alpha
\\
\notag
&\hskip1cm
+
N^{\frac{1}{2\ell_2}}
\int_{-\frac12}^{\frac12}
\vert  \Etilde_{\ell_1}(\alpha) \vert  \vert U(-\alpha,H) \vert \, \dx \alpha
\\
\notag
&\hskip1cm
+
N^{\frac{1}{2\ell_1}}
\int_{-\frac12}^{\frac12}
\vert  \Etilde_{\ell_2}(\alpha) \vert  \vert U(-\alpha,H) \vert \, \dx \alpha
+
N^{\frac{\lambda}{2}}
\int_{-\frac12}^{\frac12}
  \vert U(-\alpha,H) \vert \, \dx \alpha
\\
\label{J4-split}&
= \E_1+\E_2+\E_3+\E_4,
 \end{align}
 say. 
 By the Cauchy-Schwarz inequality, \eqref{density-def-c-def},
 \eqref{UH-estim} and
Lemma  \ref{partial-int-average} 
we obtain
 \begin{align}
\notag
 \E_1
& \ll_{\ell_1,\ell_2}
\Bigl(
\int_{-\frac12}^{\frac12}
\vert  \Etilde_{\ell_1}(\alpha) \vert^2  \vert U(-\alpha,H) \vert \, \dx \alpha
\Bigr)^{\frac{1}{2}}
\Bigl(
\int_{-\frac12}^{\frac12}
\vert  \Etilde_{\ell_2}(\alpha) \vert^2  \vert U(-\alpha,H) \vert \, \dx \alpha
\Bigr)^{\frac{1}{2}}
\\&
 \label{E1-estim-RH-series}
\ll_{\ell_1,\ell_2} 
  N^{\frac{\lambda}{2}} L^3.
\end{align}
 By the Cauchy-Schwarz inequality,  \eqref{density-def-c-def},
Lemmas \ref{UH-average} and \ref{partial-int-average} 
we obtain
 \begin{align}
\notag
 \E_2 
& \ll_{\ell_1,\ell_2}
N^{\frac{1}{2\ell_2}}
\Bigl(
\int_{-\frac12}^{\frac12}
\vert  \Etilde_{\ell_1}(\alpha) \vert^2  \vert U(-\alpha,H) \vert \, \dx \alpha
\Bigr)^{\frac{1}{2}}
\Bigl(
\int_{-\frac12}^{\frac12}
\vert U(-\alpha,H) \vert \, \dx \alpha
\Bigr)^{\frac{1}{2}}
\\&
 \label{E2-estim-RH-series}
\ll_{\ell_1,\ell_2} 
  N^{\frac{\lambda}{2}} L^2.
\end{align}
 By the Cauchy-Schwarz inequality,  \eqref{density-def-c-def},
Lemmas \ref{UH-average} and \ref{partial-int-average} 
 we obtain
 \begin{align}
\notag
 \E_3 
& \ll_{\ell_1,\ell_2}
N^{\frac{1}{2\ell_1}}
\Bigl(
\int_{-\frac12}^{\frac12}
\vert  \Etilde_{\ell_2}(\alpha) \vert^2  \vert U(-\alpha,H) \vert \, \dx \alpha
\Bigr)^{\frac{1}{2}}
\Bigl(
\int_{-\frac12}^{\frac12}
\vert U(-\alpha,H) \vert \, \dx \alpha
\Bigr)^{\frac{1}{2}}
\\&
 \label{E3-estim-RH-series}
\ll_{\ell_1,\ell_2} 
  N^{\frac{\lambda}{2}} L^2.
\end{align}
 By \eqref{UH-estim} we immediately have
 \begin{equation}
 \label{E4-estim-RH-series}
 \E_4 
 \ll_{\ell_1,\ell_2}
  N^{\frac{\lambda}{2}} L.
 \end{equation}
Hence by \eqref{J4-split}-\eqref{E4-estim-RH-series}
we finally can write that
 \begin{equation}
\label{J4-estim-RH-series}
\J_4 \ll_{\ell_1,\ell_2}
 N^{\frac{\lambda}{2}} L^3.
 \end{equation}

\subsection{Final words}

Summarizing, recalling $2\le\ell_1\le \ell_2$, 
by  \eqref{density-def-c-def}, \eqref{main-dissection-RH-series}-\eqref{J1-eval-RH-series}, 
\eqref{J2-estim-RH-series}-\eqref{J3-estim-RH-series} and \eqref{J4-estim-RH-series},
we have
\begin{align}
\notag
\sum_{n=N+1}^{N+H} 
e^{-n/N}
R^{\second}_{\ell_1,\ell_2}(n)
&=
c(\ell_1,\ell_2)
\frac{HN^{\lambda-1}}{e}
\\&
\label{almost-done}
+\Odipg{\ell_1,\ell_2}{
\frac{H}{N}
+
H^2N^{\lambda-2}
+
H^{\frac{1}{\ell_1}} N^{\frac{1}{2\ell_2}}L^{\frac32}
}
\end{align}
which is an asymptotic formula 
for $\infty(N^{1-a(\ell_1,\ell_2)}L^{b(\ell_1)})\le H \le \odi{N}$,
where $a(\ell_1,\ell_2)$ and $b(\ell_1)$ are defined in \eqref{aB-def}.
{}From  $e^{-n/N}=e^{-1}+ \Odi{H/N}$ for $n\in[N+1,N+H]$, $1\le H \le N$,
we get 
\begin{align*}
   \sum_{n = N+1}^{N + H} 
R^{\second}_{\ell_1,\ell_2}(n)
&=
c(\ell_1,\ell_2)HN^{\lambda-1}
+\Odipg{\ell_1,\ell_2}{
H^2N^{\lambda-2}
+
H^{\frac{1}{\ell_1}} N^{\frac{1}{2\ell_2}}L^{\frac32}
}  
\\&
+
  \Odig{\frac{H}{N}\sum_{n = N+1}^{N + H} R^{\second}_{\ell_1,\ell_2}(n)
}.
\end{align*}
Using $e^{n/N}\le  e^{2}$ 
and \eqref{almost-done},
 the last error term is
$\ll_{\ell_1,\ell_2} H^2N^{\lambda-2}$.
Hence we get
\begin{equation*}
   \sum_{n = N+1}^{N + H} 
R^{\second}_{\ell_1,\ell_2}(n)
=
c(\ell_1,\ell_2)HN^{\lambda-1}
+\Odipg{\ell_1,\ell_2}{
H^2N^{\lambda-2}
+
H^{\frac{1}{\ell_1}} N^{\frac{1}{2\ell_2}}L^{\frac32}
},
\end{equation*}
uniformly for every $2\le \ell_1\le \ell_2$ and 
 $\infty(N^{1-a(\ell_1,\ell_2)}L^{b(\ell_1,\ell_2)})\le H \le\odi{N}$,
where $a(\ell_1,\ell_2)$ and $b(\ell_1)$ are defined in \eqref{aB-def}.
Theorem \ref{thm-RH} follows.

 \section{Proof of Theorem \ref{thm-uncond-HL}}
 
Assume $H>2B$ and $\ell_1,\ell_2\ge 2$; we'll see at the end
of the proof how the conditions in the statement of this theorem
follow; remark that in this case we cannot interchange 
the role of  $\ell_1,\ell_2$. We have 
\begin{align}
\notag
\sum_{n=N+1}^{N+H} &
 r^{\prime}_{\ell_1,\ell_2}(n)= 
\int_{-\frac12}^{\frac12} V_{\ell_1}(\alpha)  T_{\ell_2}(\alpha)  U(-\alpha,H)e(-N\alpha) \, \dx \alpha
\\
\notag
&=
\int_{-\frac{B}{H}}^{\frac{B}{H}} V_{\ell_1}(\alpha) T_{\ell_2}(\alpha)  U(-\alpha,H)e(-N\alpha) \, \dx \alpha 
\\&
\label{dissect-HL}
 + \!\!
\int\limits_{I(B,H)} \!\!\!\!\!  
V_{\ell_1}(\alpha) T_{\ell_2}(\alpha)  U(-\alpha,H)e(-N\alpha) \, \dx \alpha,
\end{align}
where  $I(B,H):=[-1/2,-B/H]\cup  [B/H, 1/2]$.
By the Cauchy-Schwarz inequality we have
\begin{align}
\notag
\int\limits_{I(B,H)} &
V_{\ell_1}(\alpha)  T_{\ell_2}(\alpha)  U(-\alpha,H)e(-N\alpha) \, \dx \alpha
\\
\label{CS-estim}
& \ll
\Bigl(
\int\limits_{I(B,H)} \!\!\! 
\vert V_{\ell_1}(\alpha) \vert ^{2} \vert U(-\alpha,H) \vert \, \dx \alpha
\Bigr)^{\frac12}
\Bigl(
\int\limits_{I(B,H)} \!\!\! 
\vert T_{\ell_2}(\alpha) \vert ^{2} \vert U(-\alpha,H) \vert \, \dx \alpha
\Bigr)^{\frac12}.
\end{align}

A similar computation to the one in \eqref{V-average} leads to 
\begin{align}
\notag
\int\limits_{I(B,H)} \!\!\! 
\vert T_{\ell}(\alpha) \vert ^{2} &\vert U(-\alpha,H) \vert \, \dx \alpha
\ll
\int_{\frac{B}{H}}^{\frac12} 
\vert T_{\ell}(\alpha) \vert ^{2} \frac{\dx \alpha}{\alpha}
 \\ 
 \label{T-average}
 &
 \ll_\ell
 N^{\frac{1}{\ell}}  
+
\frac{H L }{B} 
+
\int_{\frac{B}{H}}^{\frac12}  
(\xi N^{\frac{1}{\ell}}   + L )\ \frac{\dx \xi}{\xi^2}
 \ll_\ell
  N^{\frac{1}{\ell}} L 
+
\frac{H L }{B},
\end{align}
for every $\ell\ge 2$.
Hence, by  \eqref{CS-estim}-\eqref{T-average} and recalling \eqref{B-def} and  \eqref{V-average},   
we obtain
\begin{align}
\notag
 \int\limits_{I(B,H)} 
V_{\ell_1}(\alpha) & T_{\ell_2}(\alpha)  U(-\alpha,H) e(-N\alpha) \, \dx \alpha
 \\&
  \label{V-average-coda-HL}
 \ll_{\ell_1,\ell_2}
  N^{\frac{\lambda}{2}} L^{\frac32}
+
 \frac{ H^{\frac12} N^{\frac{1}{2\ell_1}}  L^{\frac32}}{B^{\frac{1}{2}}}
 +\frac{H L^{\frac32}}{B}
 \ll_{\ell_1,\ell_2}
\frac{H L^{\frac32}}{B}.
\end{align}

By \eqref{dissect-HL} and \eqref{V-average-coda-HL}, we get
\begin{align}
 \notag
\sum_{n=N+1}^{N+H} 
&r^{\prime}_{\ell_1,\ell_2}(n)
 =  
\int_{-\frac{B}{H}}^{\frac{B}{H}} V_{\ell_1}(\alpha) T_{\ell_2}(\alpha)  U(-\alpha,H)e(-N\alpha) \, \dx \alpha 
 +
\Odipg{\ell_1,\ell_2}{\frac{H L^{\frac32}}{B}}.
\end{align}

Hence
\begin{align} 
 \notag
\sum_{n=N+1}^{N+H} 
&r^{\prime}_{\ell_1,\ell_2}(n)  
 = \int_{-\frac{B}{H}}^{\frac{B}{H}} f_{\ell_1}(\alpha) f_{\ell_2}(\alpha)  U(-\alpha,H)e(-N\alpha) \, \dx \alpha 
 \\
 \notag
 &
\hskip0.5cm
+
\int_{-\frac{B}{H}}^{\frac{B}{H}} f_{\ell_2}(\alpha) (V_{\ell_1}(\alpha) - f_{\ell_1}(\alpha) ) U(-\alpha,H)e(-N\alpha) \, \dx \alpha 
\\
 \notag
 &
\hskip0.5cm
+
\int_{-\frac{B}{H}}^{\frac{B}{H}} f_{\ell_1}(\alpha) (T_{\ell_2}(\alpha) - f_{\ell_2}(\alpha) ) U(-\alpha,H)e(-N\alpha) \, \dx \alpha 
 \\
 \notag
 &
 \hskip0.5cm
 +
\int_{-\frac{B}{H}}^{\frac{B}{H}} (V_{\ell_1}(\alpha)-f_{\ell_1}(\alpha) ) (T_{\ell_2}(\alpha) - f_{\ell_2}(\alpha) ) U(-\alpha,H)e(-N\alpha) \, \dx \alpha
 \\
 \notag
 &
\hskip0.5cm
+
\Odipg{\ell_1,\ell_2}{\frac{H L^{\frac32}}{B}}
 \\
 \label{main-dissection-HL}
 & 
= \I_1 +\I_2 + \I_3 + \I_4 + E,
 \end{align}
 say. We now evaluate these terms.
The main term $\I_1$ can be evaluated as in \S\ref{comp-main-term};
by \eqref{I1-eval}-\eqref{main-term} it is 
\begin{equation}
\label{I1-eval-HL}
\I_1 
=
c(\ell_1,\ell_2)   H N^{\lambda-1} +
\Odipg{\ell_1,\ell_2}{ \Bigl(\frac{H}{B}\Bigr)^{\lambda}+ H N^{\lambda-1} A(N, -C)},
 \end{equation}
for a suitable choice of   $C=C(\eps)>0$.
$\I_2$ can be estimated as in \S\ref{estim-I2}; by \eqref{I2-estim} it is 
 \begin{equation}
\label{I2-estim-HL}
\I_2 \ll_{\ell_1,\ell_2}
 H N^{\lambda-1}  A(N, -C),
\end{equation}
for a suitable choice of   $C=C(\eps)>0$, 
provided that $N^{-1+\frac{\eps}{2}}<B/H<N^{-1+\frac{5}{6\ell_1}-\eps}$; 
hence   $N^{1-\frac{5}{6\ell_1}+\eps} B \le H \le N^{1-\eps}$
suffices.

 \subsection{Estimate of $\I_3$}
Using \eqref{f-ell-T-ell-estim}  we obtain that
 \begin{align*}
\notag
\I_3
 \ll 
   \int_{-\frac{B}{H}}^{\frac{B}{H}} 
\vert  f_{\ell_1}(\alpha) \vert  (1+\vert \alpha \vert N)^{\frac12} \vert U(-\alpha,H) \vert \, \dx \alpha 
 \end{align*}
and the right hand side is equal to $E_2$ of  \S\ref{estim-I2}; hence  
by \eqref{E2-estim} we have 
 \begin{equation}
\label{I3-estim-HL}
\I_3 \ll_{\ell_1,\ell_2}
  N^{1/\ell_1}  A^{1/2-1/\ell_1} L^{1/2} (\log A)^{1/2},
\end{equation}
where $A$ is defined in \eqref{A-def}.

\subsection{Estimate of $\I_4$}
\label{estim-I4-HL}
By \eqref{main-dissection} and  \eqref{V-approx} we can write
 \begin{align}
 \notag
\I_4
& \ll_{\ell_1,\ell_2}
\int_{-\frac{B}{H}}^{\frac{B}{H}}
\vert  V_{\ell_1}(\alpha) -T_{\ell_1}(\alpha)\vert  (1+\vert \alpha \vert N)^{\frac12} \vert U(-\alpha,H) \vert \, \dx \alpha
\\
\label{I4-split-HL}
&\hskip1cm
+
\int_{-\frac{B}{H}}^{\frac{B}{H}}
 (1+\vert \alpha \vert N)  \vert U(-\alpha,H) \vert \, \dx \alpha
= R_1+R_2,
 \end{align}
 say.
$R_1$ is equal to $E_4$ of \S\ref{estim-I4}; hence
we have
\begin{equation} 
 \label{R1-estim}
R_1
 \ll_{\ell_1,\ell_2}
 N^{  \frac{1}{\ell_1} } A(N, -C),
\end{equation}
for a suitable choice of   $C=C(\eps)>0$, 
provided that $N^{-1+\frac{\eps}{2}}<B/H<N^{-1+\frac{5}{6\ell_1}-\eps}$; 
hence   $N^{1-\frac{5}{6\ell_1}+\eps} B \le H \le N^{1-\eps}$
suffices. 
$R_2$ is equal to $E_6$ of \S\ref{estim-I4}; hence 
we get
\begin{equation}
  \label{R2-estim}
R_2 
\ll
\frac{NB}{H}.
\end{equation} 
Summarizing, by \eqref{density-def-c-def} and \eqref{I4-split-HL}-\eqref{R2-estim}, we obtain
 \begin{equation}
\label{I4-estim-HL}
\I_4 \ll_{\ell_1,\ell_2} 
 H N^{\lambda-1} A(N, -C),
\end{equation}
for a suitable choice of   $C=C(\eps)>0$, 
provided that $N^{-1+\frac{\eps}{2}}<B/H<N^{-1+\frac{5}{6\ell_1}-\eps}$; 
hence   $N^{1-\frac{5}{6\ell_1}+\eps} B\le H \le N^{1-\eps}$
suffices.

\subsection{Final words} 

Summarizing, recalling that $\ell_1, \ell_2\ge 2$, 
by \eqref{B-def}, \eqref{main-dissection-HL}-\eqref{I3-estim-HL} and \eqref{I4-estim-HL},
we have that there exists $C=C(\eps)>0$ such that 
\begin{align*}
\sum_{n=N+1}^{N+H} &
r^{\prime}_{\ell_1,\ell_2}(n)
=
c(\ell_1,\ell_2)   H N^{\lambda-1} 
+
\Odipg{\ell_1,\ell_2}{H N^{\lambda-1} A(N, -C)},
\end{align*}
uniformly for    $N^{2-\frac{11}{6\ell_1} -\frac{1}{\ell_2}+\eps}\le H \le N^{1-\eps}$  which is  non-trivial  only  for $\ell_1=2$ and $2\le \ell_2\le 11$, or $\ell_1=3$ and $ \ell_2=2$.
Theorem \ref{thm-uncond-HL} follows.

\section{Proof of Theorem \ref{thm-RH-HL} }  
In this section we need some additional definitions and lemmas.
Letting
\begin{equation}
\label{omega-def}
\omega_{\ell}(\alpha)
=
\sum_{m=1}^{\infty}   
e^{-m^{\ell}/N} e(m^{\ell}\alpha)
=
\sum_{m=1}^{\infty}   
e^{-m^{\ell}z},
\end{equation}
we have the following
\begin{lemma}[Lemma 2 of \cite{LanguascoZ2016b}] 
\label{zac-lemma-series}
Let $\ell\ge 2 $ be an integer and $0<\xi\le 1/2$. Then
\[
\int_{-\xi}^{\xi} 
|\omega_{\ell}(\alpha)|^2 \ \dx\alpha 
\ll_{\ell}
  \xi N^{\frac{1}{\ell}}
  + 
\begin{cases}
L & \text{if}\ \ell =2\\
1 & \text{if}\ \ell > 2
\end{cases}
\]
\ \textrm{and} 
\[
\int_{-\xi}^{\xi} 
|\Stilde_{\ell}(\alpha)|^2 \ \dx\alpha 
\ll_{\ell}
\xi N^{\frac{1}{\ell}} L  +
\begin{cases}
L^{2} & \text{if}\ \ell =2\\
1 & \text{if}\ \ell > 2.
\end{cases}
\]
\end{lemma}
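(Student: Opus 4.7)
The plan is to mimic the proof of the third part of Lemma \ref{zac-lemma}, namely to apply Corollary 2 of Montgomery--Vaughan \cite{MontgomeryV1974} to each of the two exponential series, with frequencies $\lambda_r = 2\pi r^{\ell}$. By symmetry it suffices to integrate over $[0,\xi]$. Since $\lambda_{r+1}-\lambda_r = 2\pi((r+1)^{\ell}-r^{\ell}) \gg_{\ell} r^{\ell-1}$, both applications will produce the same skeleton
\[
\int_{0}^{\xi} \Bigl|\sum_{r\ge 1} a_r\, e(r^{\ell}\alpha)\Bigr|^{2} \dx \alpha \ll \sum_{r\ge 1} |a_r|^{2}\bigl(\xi + \Odim{\delta_r^{-1}}\bigr), \qquad \delta_r \gg_{\ell} r^{\ell-1},
\]
and the task will reduce to bounding $\sum_r |a_r|^2$ and $\sum_r |a_r|^2 r^{1-\ell}$ for the two choices of coefficients.

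For the first inequality I take $a_r = e^{-r^{\ell}/N}$. The diagonal term $\xi \sum_{r\ge 1} e^{-2r^{\ell}/N}$ is $\ll_{\ell} \xi N^{1/\ell}$, by comparing the series with $\int_{0}^{\infty} e^{-2t^{\ell}/N}\dx t$. The remainder term $\sum_{r\ge 1} r^{1-\ell} e^{-2r^{\ell}/N}$ is the part that distinguishes the two cases: for $\ell=2$ it equals $\sum_r r^{-1} e^{-2r^2/N}$, whose main contribution comes from $r\le N^{1/2}$ and yields $\ll L$; for $\ell>2$ it is dominated by $\sum_{r\ge 1} r^{1-\ell} \ll 1$ since $\ell-1\ge 2$.

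For the second inequality I take $a_r = \Lambda(r) e^{-r^{\ell}/N}$. The diagonal term becomes $\xi \sum_{r\ge 1}\Lambda(r)^{2} e^{-2r^{\ell}/N}$. Using the Chebyshev-type estimate $\sum_{r\le X}\Lambda(r)^{2} \ll X \log X$ together with Abel summation against the rapidly decaying weight $e^{-2r^{\ell}/N}$ (whose effective cut-off is $r \asymp N^{1/\ell}$), this diagonal contribution is $\ll_{\ell} \xi N^{1/\ell} L$. The remainder term $\sum_{r\ge 1}\Lambda(r)^{2} r^{1-\ell} e^{-2r^{\ell}/N}$ is handled with $\Lambda(r)\le \log r$: for $\ell=2$ it is controlled by $\sum_{p\le N^{1/2}} (\log p)^{2}/p \ll L^{2}$ via Mertens' theorem, while for $\ell>2$ it is bounded by the convergent series $\sum_{r\ge 1} (\log r)^{2} r^{1-\ell} \ll 1$.

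I do not expect any genuinely hard step: the only points that require a little care are the verification that $\delta_r \gg_{\ell} r^{\ell-1}$ with a constant depending only on $\ell$ (a mean-value computation on $(r+1)^{\ell}-r^{\ell}$), and the bookkeeping of the logarithmic factor in the diagonal of the $\widetilde{S}_{\ell}$ estimate via Abel summation. Once the two contributions are combined, multiplying by $2$ to pass from $[0,\xi]$ to $[-\xi,\xi]$ delivers exactly the two bounds in the statement.
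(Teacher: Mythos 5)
Your proof is correct and follows essentially the same route the paper uses for the directly analogous Lemma~\ref{zac-lemma}: apply Corollary~2 of Montgomery--Vaughan with $\lambda_r=2\pi r^{\ell}$, $\delta_r\gg_{\ell}r^{\ell-1}$, and coefficients $e^{-r^{\ell}/N}$ resp.\ $\Lambda(r)e^{-r^{\ell}/N}$, then bound the diagonal sum and the $\sum_r |a_r|^2 r^{1-\ell}$ remainder exactly as you do. The only point worth a sentence in a write-up is the passage from finite to infinite sums when invoking Montgomery--Vaughan (truncate and let the length tend to infinity, which the rapid decay of $e^{-r^{\ell}/N}$ justifies); otherwise your case analysis for $\ell=2$ versus $\ell>2$ and the logarithmic bookkeeping match the intended argument.
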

Recalling the definition of the $\theta$-function
\[
 \theta(z)
=
\sum_{n=-\infty}^{\infty}   
e^{-n^{2}/N} e(n^{2}\alpha)
=
\sum_{n=-\infty}^{\infty}   
e^{-n^{2}z} 
=
1+2\omega_{2}(\alpha),
\]
its modular relation
(see, \emph{e.g.}, Proposition VI.4.3, page 340, of Freitag and Busam
\cite{FreitagB2009}) gives  that
\(
\theta(z)
=
(\pi/z)^{\frac{1}{2}} 
\theta ( \pi^2/z)
\)
for $\Re(z)>0$.
Hence  we have
\begin{equation}
\label{omega-approx}
\omega_{2}(\alpha)  
= 
\frac12
\left(\frac{\pi}{z}\right)^{\frac{1}{2}} 
\!\!\!\!
- \frac12
+
\left(\frac{\pi}{z}\right)^{\frac{1}{2}} 
\sum_{j=1}^{+\infty}   e^{-j^{2}\pi^{2}/z},
\quad
\text{for}
\ \Re(z)>0.
\end{equation} 
For the series in \eqref{omega-approx} we have 
\begin{lemma} [Lemma 4 of \cite{LanguascoZ2016b}]
\label{omega-Y}
Let $N$ be a large integer,  $z= 1/N-2\pi i\alpha$, $\alpha\in [-1/2,1/2]$ and 
$Y=\Re(1/z)>0$.
We have 
\[
\Bigl\vert 
\sum_{j=1}^{+\infty}   e^{-j^{2}\pi^{2}/z}
\Bigr\vert  
\ll 
\begin{Biggcases} 
e^{- \pi^{2}  Y } & \textrm{for} \ Y\ge 1 \\
Y^{-\frac{1}{2}} & \textrm{for} \ 0 <Y\le 1.
\end{Biggcases}
\]
\end{lemma}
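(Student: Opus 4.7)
The statement to establish is a bound on the tail series $\sum_{j\ge 1} e^{-j^2\pi^2/z}$, split according to whether $Y=\Re(1/z)$ is large ($Y\ge 1$) or small ($0<Y\le 1$). My plan is to reduce the problem to estimating $\sum_{j\ge 1} e^{-j^2\pi^2 Y}$ and then handle the two ranges by elementary means.

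First I would observe that since each summand $e^{-j^2\pi^2/z}$ has modulus $e^{-j^2\pi^2 \Re(1/z)} = e^{-j^2\pi^2 Y}$, the triangle inequality immediately yields
\[
\Bigl\vert \sum_{j=1}^{+\infty} e^{-j^2\pi^2/z}\Bigr\vert \le \sum_{j=1}^{+\infty} e^{-j^2\pi^2 Y}.
\]
This reduces the proof to a real-variable estimate, with no further dependence on the argument of $z$.

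For the range $Y\ge 1$, I would factor out the $j=1$ term: writing $e^{-j^2\pi^2 Y} = e^{-\pi^2 Y}\, e^{-(j^2-1)\pi^2 Y}$ and using $Y\ge 1$ in the second factor gives $e^{-(j^2-1)\pi^2 Y}\le e^{-(j^2-1)\pi^2}$. Summing over $j\ge 1$ yields
\[
\sum_{j=1}^{+\infty} e^{-j^2\pi^2 Y} \le e^{-\pi^2 Y}\sum_{j=1}^{+\infty} e^{-(j^2-1)\pi^2} \ll e^{-\pi^2 Y},
\]
since the remaining series is a fixed convergent constant.

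For the range $0<Y\le 1$, I would use monotonicity: since the function $t\mapsto e^{-t^2\pi^2 Y}$ is decreasing for $t>0$, the integral comparison
\[
\sum_{j=1}^{+\infty} e^{-j^2\pi^2 Y} \le \int_0^{+\infty} e^{-t^2\pi^2 Y}\,\dx t
\]
holds. The substitution $u = t\pi\sqrt{Y}$ reduces this to the Gaussian integral $\frac{1}{\pi\sqrt{Y}}\int_0^{+\infty} e^{-u^2}\,\dx u = \frac{1}{2\sqrt{\pi Y}}$, which is $\ll Y^{-1/2}$. I do not expect a genuine obstacle here; the only point worth double-checking is that the integral comparison is valid (which it is, by the decreasing nature of the integrand, so $\sum_{j\ge 1} f(j)\le \int_0^{\infty} f(t)\,\dx t$ with $f(t)=e^{-t^2\pi^2 Y}$). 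Combining the two cases gives the stated bound.
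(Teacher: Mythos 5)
Your proof is correct: the reduction $\bigl\vert e^{-j^{2}\pi^{2}/z}\bigr\vert = e^{-j^{2}\pi^{2}Y}$ is exact, the factoring argument for $Y\ge 1$ and the integral comparison for $0<Y\le 1$ are both valid, and this is essentially the same elementary argument used for Lemma 4 of \cite{LanguascoZ2016b}, which the paper cites without reproducing. No gaps.
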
 
Since
\begin{equation}
\notag
Y
=
\Re(1/z)
=
\frac{N}{1+4\pi^2\alpha^2N^2}
\ge
\frac{1}{5\pi^2}
\begin{cases}
N & \textrm{if} \ \vert \alpha \vert \le 1/N\\
(\alpha^2N)^{-1} & \textrm{if} \ \vert \alpha \vert > 1/N,
\end{cases}
\end{equation}
from Lemma \ref{omega-Y} we get 
\begin{equation}
\label{omega-Y-estim}
\Bigl\vert 
\sum_{j=1}^{+\infty} e^{-j^{2}\pi^{2}/z}
\Bigr\vert  
\ll
\begin{cases}
e^{-  N/5} & \textrm{if} \ \vert \alpha \vert \le 1/N\\
\exp(- 1/(5\alpha^2N)) & \textrm{if} \ 1/N<\vert \alpha \vert = \odim{N^{-\frac{1}{2}}}\\
1+ N^{\frac{1}{2}} \vert \alpha\vert & \textrm{otherwise}.
\end{cases}
\end{equation}

 \begin{lemma}
\label{partial-int-average-omega2}
Let $N$ be a
sufficiently large integer and $L= \log N$.
  We have 
\[
\int_{-\frac{1}{2}}^{\frac{1}{2}}
      \vert \omega_{2}(\alpha)  \vert^{2}
        \vert U(\alpha,H)\vert 
     \, \dx \alpha
\ll
N^{\frac{1}{2}}  L + HL.
\] 
\end{lemma}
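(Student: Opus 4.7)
The proof proposal follows the same partial-integration strategy already used for Lemma \ref{partial-int-average}, with $\omega_2$ playing the role of $\widetilde{E}_\ell$. The key input will be Lemma \ref{zac-lemma-series}, which furnishes the short-interval $L^2$ mean $\int_{-\xi}^{\xi}|\omega_2(\alpha)|^2\,\dx\alpha \ll \xi N^{1/2} + L$ valid for $0<\xi\le 1/2$.

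First I would exploit the symmetry $|\omega_2(-\alpha)|=|\omega_2(\alpha)|$ together with the elementary bound \eqref{UH-estim} $|U(\alpha,H)|\ll \min(H,|\alpha|^{-1})$ to split
\[
\int_{-\frac12}^{\frac12} |\omega_2(\alpha)|^2 |U(\alpha,H)|\,\dx\alpha
\ll H\int_{-\frac1H}^{\frac1H}|\omega_2(\alpha)|^2\,\dx\alpha
+ \int_{\frac1H}^{\frac12}|\omega_2(\alpha)|^2\,\frac{\dx\alpha}{\alpha}.
\]
The first piece is handled directly by Lemma \ref{zac-lemma-series} with $\xi=1/H$: it contributes $\ll H\bigl(N^{1/2}/H + L\bigr) = N^{1/2} + HL$, which already fits the target bound.

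For the remaining tail integral I would apply partial integration, setting $F(\xi):=\int_{-\xi}^{\xi}|\omega_2(\alpha)|^2\,\dx\alpha$. This yields
\[
\int_{\frac1H}^{\frac12}|\omega_2(\alpha)|^2\,\frac{\dx\alpha}{\alpha}
\ll F(1/2) + H\,F(1/H) + \int_{\frac1H}^{\frac12}\frac{F(\xi)}{\xi^{2}}\,\dx\xi.
\]
Inserting the bound $F(\xi)\ll \xi N^{1/2} + L$ from Lemma \ref{zac-lemma-series}, the boundary terms give $\ll N^{1/2}+HL$, while
\[
\int_{\frac1H}^{\frac12}\frac{\xi N^{1/2}+L}{\xi^{2}}\,\dx\xi
\ll N^{1/2}\log H + HL
\ll N^{1/2}L + HL,
\]
exactly the required size.

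Combining the two pieces yields the claimed bound $N^{1/2}L+HL$. There is no real obstacle here: the estimate is structurally identical to Lemma \ref{partial-int-average}, only shifted down in strength because the available $L^2$ mean for $\omega_2$ has the shape $\xi N^{1/2}+L$ (coming from a Montgomery--Vaughan-type input) rather than the sharper unconditional--under--RH bound $\xi N^{1/\ell}L^{2}$ available for $\widetilde{E}_\ell$. The only mildly delicate point is keeping track of the factor $L$ from the boundary term $\int_{1/H}^{1/2}N^{1/2}/\xi\,\dx\xi$, which produces exactly the single logarithm appearing in the statement.
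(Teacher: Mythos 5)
Your proposal is correct and follows essentially the same route as the paper: split via $|U(\alpha,H)|\le\min(H,|\alpha|^{-1})$, bound the central piece directly by Lemma \ref{zac-lemma-series} with $\xi=1/H$, and handle the tail by partial integration using $F(\xi)=\int_{-\xi}^{\xi}|\omega_2|^2\,\dx\alpha\ll\xi N^{1/2}+L$. The paper treats the two tails $[1/H,1/2]$ and $[-1/2,-1/H]$ separately rather than invoking the symmetry of $|\omega_2|$, but that is only a cosmetic difference.
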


\begin{proof}
By \eqref{UH-estim} we have 
\begin{align}
\notag
\int_{-\frac{1}{2}}^{\frac{1}{2}}
      \vert \omega_{2}(\alpha)  \vert^{2}
        \vert U(\alpha,H)\vert 
     \, \dx \alpha
&\ll
H
\int_{-\frac1H}^{\frac1H} \,
\vert \omega_{2}(\alpha)  \vert^{2}
\ \dx \alpha
+
\int_{\frac1H}^{\frac12} \,
\vert \omega_{2}(\alpha)  \vert^{2}
\frac{\dx \alpha}{\alpha}
\\&
\notag
\hskip1cm
+
\int_{-\frac12}^{-\frac1H} \,
\vert \omega_{2}(\alpha)  \vert^{2}
\frac{\dx \alpha}{\alpha}
\\
\label{M-split-1}
&
=
M_1+M_2+M_3,
\end{align}
say. By Lemma \ref{zac-lemma-series} we immediately get that
\begin{equation}
\label{M1-estim-1}
M_1 \ll N^{\frac{1}{2}}  + HL.
\end{equation}
By a partial integration and Lemma \ref{zac-lemma-series} we obtain
\begin{align}
\notag
M_2 
&\ll 
\int_{-\frac12}^{\frac12} \,
\vert \omega_{2}(\alpha)  \vert^{2}
\ \dx \alpha
+
H \int_{-\frac1H}^{\frac1H} \,
\vert \omega_{2}(\alpha)  \vert^{2}
\ \dx \alpha 
+
\int_{\frac1H}^{\frac12}
\Bigl(
\int_{-\xi}^{\xi}
\vert \omega_{2}(\alpha)  \vert^{2}
\ \dx \alpha
\Bigr)
\frac{\ \dx \xi}{\xi^2}
\\&
\label{M2-estim-1}
\ll
N^{\frac{1}{2}} + HL 
+
\int_{\frac1H}^{\frac12} \frac{N^{\frac{1}{2}} \xi  + L }{\xi^2}\ \dx \xi
\ll
N^{\frac{1}{2}}  L + HL.
\end{align}
A similar computation leads to $M_3 \ll N^{\frac{1}{2}}  L + HL$.
By \eqref{M-split-1}-\eqref{M2-estim-1}, the lemma follows.
\end{proof}

{}From now on we assume the Riemann Hypothesis holds.
Let  $1<D=D(N)<H/2$ to be chosen later and 
$I(D,H):=[-1/2,-D/H]\cup  [D /H,1/2]$.
By \eqref{main-defs} 
and \eqref{omega-def}-\eqref{omega-approx},
and recalling \eqref{Etilde-def},
it is an easy matter to see that
\begin{align}
\notag
\sum_{n = N+1}^{N + H} & e^{-n/N}
R^{\prime}_{\ell,2}(n)
= 
\int_{-\frac{1}{2}}^{\frac{1}{2}}  \Vtilde_{\ell}(\alpha) \omega_{2}(\alpha)  U(-\alpha,H) e(-N\alpha) \, \dx \alpha
\\
\notag
 &=
  \int_{-\frac{1}{2}}^{\frac{1}{2}}  (\Vtilde_{\ell}(\alpha)- \Stilde_{\ell}(\alpha))\, \omega_{2}(\alpha)  U(-\alpha,H) e(-N\alpha) \, \dx \alpha
 \\
 \notag
 &  \hskip0.5cm
 + 
  \frac{\Gamma(1/\ell)}{2\ell}
  \int_{-\frac{D}{H}}^{\frac{D}{H}}   \Bigl( 
 \frac{\pi^{\frac{1}{2}}}{ z^{\frac{1}{2}+\frac{1}{\ell}} } -   \frac{1}{z^{\frac{1}{\ell}}}\Bigr)
 U(-\alpha,H) e(-N\alpha)\, \dx \alpha
 \\
 \notag
 & \hskip0.5cm
  +
 \int_{-\frac{D}{H}}^{\frac{D}{H}} 
 \Etilde_{\ell}(\alpha) 
 \omega_{2}(\alpha) U(-\alpha,H)
 e(-N\alpha)\, \dx \alpha 
\\
 \notag
 &  \hskip0.5cm
 +
 \frac{\pi^{\frac{1}{2}} \Gamma(1/\ell)}{\ell}
\int_{-\frac{D}{H}}^{\frac{D}{H}}   
 \frac{1}{z^{\frac{1}{2}+\frac{1}{\ell}} }
 \Bigl( \sum_{j=1}^{+\infty}   e^{-j^{2}\pi^{2}/z} \Bigr)
 U(-\alpha,H) e(-N\alpha)\, \dx \alpha 
\\
 \notag
 &  \hskip0.5cm+ 
\int\limits_{\mathclap{I(D,H)}} 
\Stilde_{\ell}(\alpha)\omega_{2}(\alpha)   U(-\alpha,H)  e(-N\alpha) \, \dx \alpha
\\
\label{approx-th1-part4} 
&
= I_{0} + I_{1}+I_{2}+I_{3} +I_{4}, 
\end{align}
say.
Using Lemma \ref{tilde-trivial-lemma}  and the  Cauchy-Schwarz inequality we have
\begin{align*} 
I_{0} 
\ll_\ell 
N^{\frac{1}{2\ell}}
\Bigl(
\int_{-\frac{1}{2}}^{\frac{1}{2}}
      \vert \omega_{2}(\alpha)  \vert^{2}
        \vert U(\alpha,H)\vert 
     \, \dx \alpha
     \Bigr)^{\frac{1}{2}}
 \Bigl(
      \int_{-\frac{1}{2}}^{\frac{1}{2}}
       \vert U(\alpha,H) \vert
        \, \dx \alpha        
\Bigr)^{\frac{1}{2}}.
\end{align*}
By Lemmas \ref{UH-average} and \ref{partial-int-average-omega2}
we obtain
\begin{equation}
\label{I0-estim-omega}
I_{0}
\ll_\ell 
N^{\frac{1}{2\ell}}
(N^{\frac{1}{2}}L  + H L)^{\frac{1}{2}}L^{\frac12}  
\ll
N^{\frac{1}{4}+\frac{1}{2\ell}}L+ H^{\frac{1}{2}}N^{\frac{1}{2\ell}}L.
\end{equation}

Now we evaluate $I_{1}$. Using Lemma \ref{Laplace-formula}, 
 \eqref{UH-estim} and   $e^{-n/N}=e^{-1}+ \Odi{H/N}$ for $n\in[N+1,N+H]$, $1\le H \le N$,
we immediately get 
\begin{align}
\notag
I_{1}
&=
\frac{\Gamma(1/\ell)}{2\ell}
 \sum_{n = N+1}^{N + H}   \Bigl( \frac{\pi^{\frac{1}{2}}}{\Gamma(\frac{1}{2}+\frac{1}{\ell})} 
 n^{\frac{1}{\ell}-\frac{1}{2}}  - n^{\frac{1}{\ell}-1}    \Bigr)  e^{-n/N} + \Odipg{\ell}{\frac{H}{N}}
 \\&
 \notag
 \hskip1cm
+
\Odipg{\ell}{\int_{\frac{D}{H}}^{\frac{1}{2}} \frac{\dx\alpha}{\alpha^2}}
 \\
 \label{I1-eval-part4}
 & =
    \frac{c(\ell,2)}{e} HN^{\frac{1}{\ell}-\frac{1}{2}}
 + \Odipg{\ell}{\frac{H}{N^{1-\frac{1}{\ell}}}+\frac{H^2}{N^{\frac{3}{2}-\frac{1}{\ell}}}+\frac{H}{D}}.
\end{align}
To have that the first term in $I_{1}$ dominates in $I_{0} + I_{1}$ we need that 
$D= \infty(N^{\frac{1}{2}-\frac{1}{\ell}})$, $H=\odi{N}$ and  $H=\infty(N^{1-\frac{1}{\ell}}L^{2})$,
$\ell\ge 2$.

Now we estimate $I_{3}$. Assuming $H=\infty(N^{\frac{1}{2}}D)$, 
by \eqref{z-estim} and \eqref{omega-Y-estim}, we have,
using the substitution $u=1/(5N\alpha^2)$ in the last integral, that
\begin{align}
\notag
I_{3}
&\ll_\ell 
\frac{HN^{\frac{1}{2}+\frac{1}{\ell}}}{e^{N/5}} \int_{-\frac{1}{N}}^{\frac{1}{N}}  \dx \alpha
+
\frac{H}{e^{H^2/(5N)}} \int_{\frac{1}{N}}^{\frac{1}{H}}  \frac{\dx \alpha}{\alpha^{\frac{1}{2}+\frac{1}{\ell}}} 
+
\int_{\frac{1}{H}}^{\frac{D}{H}}  \frac{\dx \alpha}{\alpha^{3/2+\frac{1}{\ell}}e^{1/(5N\alpha^2)}}  
\\
\notag
&
\ll_\ell 
\frac{HN^{\frac{1}{\ell}-\frac{1}{2}}}{e^{N/5}} + \frac{HN^{\frac{1}{\ell}-\frac{1}{2}}L}{e^{H^2/(5N)}} 
+
N^{\frac{1}{4}+\frac{1}{2\ell}}
\int_{H^2/(5ND^2)}^{H^2/(5N)} u^{-3/4+\frac{1}{2\ell}}e^{-u}\ \dx u
\\
\label{I3-estim-final-part4}
&
\ll_\ell 
\frac{HN^{\frac{1}{\ell}-\frac{1}{2}}L}{e^{H^2/(5N)}} 
+
N^{\frac{1}{4}+\frac{1}{2\ell}} = \odim{HN^{\frac{1}{\ell}-\frac{1}{2}}},
\end{align}
provided that  $H=\infty(N^{\frac{1}{2}} \log L)$ and $H=\infty(N^{1-\frac{1}{\ell}})$, $\ell\ge 2$.

Now we estimate $I_{2}$. Recalling that $H=\infty(N^{\frac{1}{2}}D)$,
for every $\vert \alpha \vert \le D/H$
we have, by \eqref{omega-approx}-\eqref{omega-Y-estim}, 
that $\vert\omega_{2}(\alpha) \vert \ll \vert z \vert ^{-\frac{1}{2}}$. 
Hence
\[
I_{2} 
\ll
 \int_{-\frac{D}{H}}^{\frac{D}{H}}
      \vert \Etilde_{\ell}(\alpha)  \vert  \frac{\vert U(\alpha,H)\vert}{\vert z \vert ^{\frac{1}{2}}}
     \, \dx \alpha.
\]
Using  \eqref{z-estim} and the Cauchy-Schwarz inequality we get
\begin{align}
\notag
I_{2}
&\ll 
H N^{\frac{1}{2}} \Bigl(  \int_{-\frac{1}{N}}^{\frac{1}{N}} \dx \alpha  \Bigr)^{\frac{1}{2}}
\Bigl(
 \int_{-\frac{1}{N}}^{\frac{1}{N}}  \vert \Etilde_{\ell}(\alpha)  \vert^{2}   \, \dx \alpha 
 \Bigr)^{\frac{1}{2}}
  \\
  \notag
     &\hskip1cm 
      +
 H \Bigl(  \int_{\frac{1}{N}}^{\frac{1}{H}}    \frac{\dx \alpha}{\alpha^{\frac{1}{2}}}\Bigr)^{\frac{1}{2}} 
\Bigl(
 \int_{\frac{1}{N}}^{\frac{1}{H}}\!\!  \vert \Etilde_{\ell}(\alpha)  \vert^{2}  
 \frac{\dx \alpha}{\alpha^{\frac{1}{2}}}
  \Bigr)^{\frac{1}{2}}
   \\
  \notag
     &\hskip1cm     +
  \Bigl(  \int_{\frac{1}{H}}^{\frac{D}{H}}   \frac{\dx \alpha}{\alpha^{\frac{3}{2}}}\Bigr)^{\frac{1}{2}} 
\Bigl(
 \int_{\frac{1}{H}}^{\frac{D}{H}}  \vert \Etilde_{\ell}(\alpha)  \vert^{2}   
 \frac{\dx \alpha}{\alpha^{\frac{3}{2}}}
   \Bigr)^{\frac{1}{2}}
 \end{align}
 
By  Lemma \ref{LP-Lemma-gen}  we get 
\begin{align}
\notag
I_{2}
& 
 \ll_{\ell}
H N^{\frac{1}{2\ell}-\frac{1}{2}} L 
+ 
H^{3/4}  N^{\frac{1}{2\ell}}  L  \Bigl(  \frac{1}{H^{\frac{1}{2}}}+ \int_{\frac{1}{N}}^{\frac{1}{H}}  \frac{\dx \xi}{\xi^{\frac{1}{2}}}   \Bigr)^{\frac{1}{2}} 
  \\
  \notag
     &\hskip1cm 
 +
H^{\frac{1}{4}} N^{\frac{1}{2\ell}} L \Bigl( H^{\frac{1}{2}} +   \int_{\frac{1}{H}}^{\frac{D}{H}}  \frac{\dx \xi}{\xi^{\frac{3}{2}}}   \Bigr)^{\frac{1}{2}}
 \\&
  \label{I2-estim-final-part4}
 \ll_{\ell}
 H^{\frac{1}{2}}N^{\frac{1}{2\ell}} L.
\end{align}

We remark that $I_2=\odim{HN^{\frac{1}{\ell}-\frac{1}{2}} }$ provided that $H=\infty(N^{1-\frac{1}{\ell}} L^{2})$,
$\ell\ge 2$.

Now we estimate $I_{4}$. By \eqref{UH-estim}, Lemma \ref{zac-lemma-series} 
and a partial integration argument
we get
\begin{align}
\notag
I_4
&\ll
\int_{\frac{D}{H}}^{\frac{1}{2}}
\vert  \Stilde_{\ell}(\alpha) \omega_{2}(\alpha) \vert
\frac{\dx \alpha}{\alpha}
\ll
\Bigl(
\int_{\frac{D}{H}}^{\frac{1}{2}}  
\vert \Stilde_{\ell}(\alpha)\vert^2 \frac{\dx \alpha}{\alpha}
\Bigr)^{\frac{1}{2}}
\Bigl(
\int_{\frac{D}{H}}^{\frac{1}{2}}  
\vert \omega_{2}(\alpha) \vert^2\frac{\dx \alpha}{\alpha}
\Bigr)^{\frac{1}{2}}
\\
\notag
&\ll_\ell
\Bigl(
 N^{\frac{1}{\ell}}   L
+
\frac{HL^{2}}{D}
+
L
\int_{\frac{D}{H}}^{\frac{1}{2}}  
(\xi N^{\frac{1}{\ell}}  + L) \frac{\dx \xi}{\xi^2}
\Bigr)^{\frac{1}{2}}
  \\
  \notag
     &\hskip1cm 
     \times
\Bigl(
 N^{\frac{1}{2}} 
+
\frac{HL}{D}
+
\int_{\frac{D}{H}}^{\frac{1}{2}}  
(\xi N^{\frac{1}{2}}  + L) \frac{\dx \xi}{\xi^2}
\Bigr)^{\frac{1}{2}}
\\
\label{I4-estim-final-part4}
&\ll_\ell
 L^{\frac32} \Bigl(
 N^{\frac{1}{4}+\frac{1}{2\ell}}
+
\frac{H}{D}
\Bigr) .
\end{align}
Clearly we have that $I_4=\odim{HN^{\frac{1}{\ell}-\frac{1}{2}}}$
provided that $D=\infty(N^{\frac{1}{2}-\frac{1}{\ell}}L^{\frac32})$ and $H=\infty(N^{\frac{3}{4}-\frac{1}{2\ell}}L^{\frac32})$, $\ell\ge 2$.

Combining the previous conditions on $H$ and $D$ we can choose $D=N^{\frac{1}{2}-\frac{1}{\ell}}L^{2}/(\log L)$
and 
$H=\infty(N^{1-\frac{1}{\ell}}L^{2})$.
Hence using \eqref{approx-th1-part4}-\eqref{I4-estim-final-part4} we  
can write  
\begin{align*}
\sum_{n = N+1}^{N + H}  e^{-n/N}
R^{\prime}_{\ell,2}(n)  
& =
\frac{ c(2,\ell) }{e} HN^{\frac{1}{\ell}-\frac{1}{2}}
\\&
  + \Odipg{\ell}{\frac{H^2}{N^{\frac{3}{2}-\frac{1}{\ell}}}+ \frac{H N^{\frac{1}{\ell}-\frac{1}{2}}\log L} {L^{\frac12}} 
+
H^{\frac{1}{2}}N^{\frac{1}{2\ell}} L}.
\end{align*}
Theorem \ref{thm-RH-HL}  follows for $\infty(N^{1-\frac{1}{\ell}}L^{2}) \le H \le \odi{N}$, $\ell\ge 2$,
since the exponential weight $e^{-n/N}$
can be removed as we did at the bottom of the proof of Theorem \ref{thm-RH}.
\qed

\bigskip

\end{document}